\documentclass[reqno]{amsart}
\usepackage{amssymb,latexsym}
\usepackage{graphicx}
\usepackage{fancyhdr}
\numberwithin{equation}{section}
\newtheorem{thm}{Theorem}[section]
\newtheorem{theorem}[thm]{Theorem}

\newtheorem{lemma}[thm]{Lemma}
\newtheorem{definition}[thm]{Definition}
\newtheorem{corollary}[thm]{Corollary}

\begin{document}

\setcounter{page}{1}

\title[Reciprocity formulas]{Reciprocity formulas for generalized Dedekind character sums}
\thanks{2020 Mathematics Subject Classification. 11F20; 11B68; 42A16.}\thanks{Keywords. Dedekind sums; Bernoulli functions; Fourier series; Reciprocity formulas}
\author{Yuan He}
\address{School of Mathematics and Big Data, Neijiang Normal University, Neijiang 641100, Sichuan, People's Republic of China}
\email{hyyhe@aliyun.com}

\begin{abstract}
In this paper, we apply the Fourier series technique to establish a new formula for the product of the generalized Bernoulli function and the Bernoulli function. As applications, we derive some reciprocity formulas for generalized Dedekind character sums. In particular, we show that the reciprocity formulas of Berndt, Can, and others follow as special cases.
\end{abstract}

\maketitle

\section{Introduction}\label{sec1}

Let $\mathbb{N}$ be the set of positive integers, $\mathbb{N}_{0}$ the set of non-negative integers, $\mathbb{Z}$ the set of integers, $\mathbb{R}$ the set of real numbers, and $\mathbb{C}$ the set of complex numbers. Denote by $(a,b)$ the greatest common divisor of $a,b\in\mathbb{Z}$; by $\{x\}$ the fractional part of $x\in\mathbb{R}$; by $[x]$ the integer part of $x\in\mathbb{R}$, satisfying $[x]=x-\{x\}$; by $\delta_{l,k}$ the Kronecker delta-function, defined for $l,k\in\mathbb{N}_{0}$ by $\delta_{l,k}=1$ if $l=k$ and $0$ otherwise; and by $\delta_{\mathbb{Z}}(x)$ the indicator function of
$\mathbb{Z}$, defined by $\delta_{\mathbb{Z}}(x)=1$ if $x\in\mathbb{Z}$ and $0$ otherwise. Let $\overline{B}_{n}(x)$ denote the $n$-th Bernoulli function, defined for $n\in\mathbb{N}_{0}$ and $x\in\mathbb{R}$ by
\begin{equation*}
\overline{B}_{n}(x)=B_{n}(\{x\})+\frac{1}{2}\delta_{1,n}\delta_{\mathbb{Z}}(x),
\end{equation*}
where $B_{n}(x)$ is the Bernoulli polynomial of degree $n$, defined by the generating function
\begin{equation*}
\frac{te^{xt}}{e^{t}-1}=\sum_{n=0}^{\infty}B_{n}(x)\frac{t^{n}}{n!}\quad(|t|<2\pi).
\end{equation*}
For $a\in\mathbb{Z}$ and $b\in\mathbb{N}$, the classical Dedekind sums $s(a,b)$ are defined by
\begin{equation}\label{eq1.1}
s(a,b)=\sum_{r=1}^{b}\overline{B}_{1}\biggl(\frac{ar}{b}\biggl)\overline{B}_{1}\biggl(\frac{r}{b}\biggl).
\end{equation}
In the 1880s, Dedekind \cite{dedekind} introduced \eqref{eq1.1} in connection with the transformation formulas of the Dedekind eta-function. From this, he deduced his reciprocity theorem: if
$a,b\in\mathbb{N}$ with $(a,b)=1$, then
\begin{equation}\label{eq1.2}
s(a,b)+s(b,a)=-\frac{1}{4}+\frac{1}{12}\biggl(\frac{a}{b}+\frac{b}{a}+\frac{1}{ab}\biggl).
\end{equation}
Since the early 20th century, many mathematicians have studied these sums and their generalizations, and have established the corresponding reciprocity formulas; see, e.g., \cite{beck1,beck2,berndt4,nagasaka} and the references therein.

This paper is concerned with Dedekind character sums introduced by Berndt \cite{berndt1,berndt3}, together with their generalizations. In 1973, while studying the transformation formulas of a generalized Eisenstein series, Berndt \cite{berndt1} was led to define the sums $s(a,b;\chi,\overline{\chi})$ by
\begin{equation}\label{eq1.3}
s(a,b;\chi,\overline{\chi})=\sum_{r=1}^{bq}\chi(r)\overline{B}_{1}\biggl(\frac{ar}{b},\overline{\chi}\biggl)\overline{B}_{1}\biggl(\frac{r}{bq}\biggl),
\end{equation}
where $a,b\in\mathbb{Z}$ with $b\geq1$, $\chi$ is a Dirichlet character modulo $q$ ($q\in\mathbb{N}$), $\overline{\chi}$ denotes the complex conjugation of $\chi$, and $\overline{B}_{n}(x,\chi)$ is the $n$-th generalized Bernoulli function, defined for $n\in\mathbb{N}_{0}$ and $x\in\mathbb{R}$ by
\begin{equation*}
\overline{B}_{n}(x,\chi)=q^{n-1}\sum_{r=1}^{q}\chi(r)\overline{B}_{n}\biggl(\frac{r+x}{q}\biggl).
\end{equation*}
Meanwhile, Berndt \cite[Theorem 4]{berndt1} used his transformation formulas to derive the following reciprocity theorem. Let $q\in\mathbb{N}$ with $q\geq2$, and let $\chi$ be a non-principal primitive character modulo $q$. Then, for $a,b\in\mathbb{N}$ with $(a,b)=1$ and either $a\equiv0$ or $b\equiv0\pmod{q}$,
\begin{equation}\label{eq1.4}
s(a,b;\chi,\overline{\chi})+s(b,a;\overline{\chi},\chi)=B_{1}(\chi)B_{1}(\overline{\chi}),
\end{equation}
where $B_{n}(\chi)=\overline{B}_{n}(0,\chi)$. Later, in 1975, Berndt \cite{berndt3} introduced the sums $s_{1}(a,b;\chi)$ and $s_{2}(a,b;\chi)$, defined for a Dirichlet character $\chi$ modulo $q$ ($q\in\mathbb{N}$), and for $a,b\in\mathbb{Z}$ with $b\geq1$, by
\begin{equation}\label{eq1.5}
s_{1}(a,b;\chi)=\sum_{r=1}^{bq}\chi(r)\overline{B}_{1}\biggl(\frac{ar}{b}\biggl)\overline{B}_{1}\biggl(\frac{r}{bq}\biggl),
\end{equation}
and
\begin{equation}\label{eq1.6}
s_{2}(a,b;\chi)=\sum_{r=1}^{bq}\overline{B}_{1}\biggl(\frac{ar}{b},\chi\biggl)\overline{B}_{1}\biggl(\frac{r}{bq}\biggl).
\end{equation}
Moreover, by applying his transformation formulas for another generalized Eisenstein series, Berndt \cite[Theorem 7]{berndt3} showed that for $a,b,q\in\mathbb{N}$ with $(a,b)=1$ and $q\geq2$, and for an even, non-principal primitive character $\chi$ modulo $q$, if $(b,q)=1$, then
\begin{equation}\label{eq1.7}
s_{1}(a,b;\chi)+s_{2}(b,a;\chi)=\frac{a}{2b}B_{2}(\chi);
\end{equation}
and if $b\equiv0\pmod{q}$, then
\begin{equation}\label{eq1.8}
s_{1}(a,b;\chi)+s_{2}(b,a;\chi)=\frac{a}{2b}B_{2}(\chi)+\frac{\overline{\chi}(a)}{2ab}B_{2}(\chi).
\end{equation}
In 1977, Berndt \cite{berndt4} provided an alternative proof of \eqref{eq1.4}, \eqref{eq1.7} and \eqref{eq1.8} by applying the (periodic) Poisson summation formula.
In 2007, Cenkci, Can and Kurt \cite{cenkci} studied some generalizations of \eqref{eq1.3} and found the corresponding reciprocity formulas.
In 2017, Da\u{g}l{\i} and Can \cite{dagli} explored similar generalizations by applying Berndt's character analogue of the Euler--Maclaurin summation formula \cite{berndt2}.
Furthermore, in 2020, Can \cite{can} investigated the sums $s_{m,n}(a,b,c;\chi_{1},\chi_{2};x,y,z)$ and $s_{m,n}^{*}(a,b,c;\chi_{1},\chi_{2};x,y,z)$, defined for Dirichlet characters $\chi_{1}$ and $\chi_{2}$ modulo $q$ ($q\in\mathbb{N}$), and for $m,n\in\mathbb{N}_{0}$, $a,b,c\in\mathbb{Z}$ with $c\geq1$, and $x,y,z\in\mathbb{R}$, by
\begin{eqnarray}\label{eq1.9}
&&s_{m,n}(a,b,c;\chi_{1},\chi_{2};x,y,z)\nonumber\\
&&\qquad=\sum_{r=1}^{cq}\chi_{1}(r)\overline{B}_{m}\biggl(\frac{a(r+z)}{c}+x,\chi_{2}\biggl)\overline{B}_{n}\biggl(\frac{b(r+z)}{cq}+y\biggl),
\end{eqnarray}
and
\begin{eqnarray}\label{eq1.10}
&&s_{m,n}^{*}(a,b,c;\chi_{1},\chi_{2};x,y,z)\nonumber\\
&&\qquad=\sum_{r=1}^{c}\overline{B}_{m}\biggl(\frac{aq(r+z)}{c}+x,\chi_{1}\biggl)\overline{B}_{n}\biggl(\frac{bq(r+z)}{c}+y,\chi_{2}\biggl).
\end{eqnarray}
In particular, Can \cite[Theorem 2.2]{can} established a reciprocity formula relating the sums \eqref{eq1.9} and \eqref{eq1.10}, which holds for $m,n,a,b,c,q\in\mathbb{N}$ satisfying $(a,b)=(b,c)=(c,a)=1$ and $q\geq2$, and for two non-principal primitive characters $\chi_{1}$ and $\chi_{2}$ modulo $q$.

Naturally, one may ask whether the hypothesis $(a,b)=(b,c)=(c,a)=1$ in Can's reciprocity formula \cite[Theorem 2.2]{can} can be removed. One may also ask whether Berndt's reciprocity formulas \eqref{eq1.7} and \eqref{eq1.8} can be generalized in the same way as \eqref{eq1.4}.

To address these two questions, we study the aforementioned sums from a different perspective. Specifically, we apply the Fourier series technique to establish the following formula for the product of the generalized Bernoulli function and the Bernoulli function, which plays a key role in the sequel.

\begin{theorem}\label{thm1.1} Let $m,n,q\in\mathbb{N}$, and $a,b\in\mathbb{Z}$ with $a\not=0$ and $b\not=0$. Let $\chi$ be a primitive Dirichlet character modulo $q$. Then, for $x,y,z\in\mathbb{R}$,
\begin{eqnarray}\label{eq1.11}
&&\overline{B}_{m}(ax+y,\overline{\chi})\overline{B}_{n}\biggl(\frac{bx}{q}+z\biggl)\nonumber\\
&&\qquad=nq^{m-1}b^{n-1}\chi(-1)\mathrm{sgn}(b)\sum_{j=1}^{m}\binom{m}{j}\frac{(-1)^{j}a^{m-j}}{m+n-j}\sum_{r=1}^{q}\overline{\chi}(r)\nonumber\\
&&\qquad\qquad\times\sum_{l=1}^{|b|}\overline{B}_{j}\biggl(\frac{a(l+z)}{b}-\frac{y-r}{q}\biggl)\overline{B}_{m+n-j}\biggl(\frac{x}{q}+\frac{l+z}{b}\biggl)\nonumber\\
&&\qquad\quad+m(aq)^{m-1}\chi(-1)\mathrm{sgn}(a)\sum_{j=1}^{n}\binom{n}{j}\frac{(-1)^{j}b^{n-j}}{m+n-j}\sum_{r=1}^{q}\overline{\chi}(r)\nonumber\\
&&\qquad\qquad\times\sum_{l=1}^{|a|}\overline{B}_{j}\biggl(\frac{bl}{a}+\frac{by-br}{aq}-z\biggl)\overline{B}_{m+n-j}\biggl(\frac{x}{q}+\frac{l}{a}+\frac{y-r}{aq}\biggl)\nonumber\\
&&\qquad\quad+\frac{mb^{n}\overline{B}_{m+n}(ax+y,\overline{\chi})}{(aq)^{n}(m+n)}+\frac{nq^{m-1}a^{m}\chi(0)G(1,\overline{\chi})\overline{B}_{m+n}(\frac{bx}{q}+z)}{b^{m}(m+n)}\nonumber\\
&&\qquad\quad+\frac{(-1)^{n-1}m!n!(a,b)^{m+n}\chi(\frac{b}{(a,b)})\overline{B}_{m+n}\bigl(\frac{by-aqz}{(a,b)},\overline{\chi}\bigl)}{(aq)^{n}b^{m}(m+n)!}\nonumber\\
&&\qquad\quad-\delta_{1,m}\delta_{1,n}\mathrm{sgn}(ab)\frac{\delta_{\mathbb{Z}}(ax+y)\delta_{\mathbb{Z}}(\frac{bx}{q}+z)\overline{\chi}(-ax-y)}{4}.
\end{eqnarray}
\end{theorem}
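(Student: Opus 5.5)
The plan is to expand both factors in their Fourier series, multiply the two series, and then reorganize the resulting double sum according to the relation between the two frequencies. For $n\ge1$ we have, at every $x$ (the normalization $\overline{B}_1(k)=0$ at integers makes the convergent symmetric partial sums match the definition),
\begin{equation*}
\overline{B}_{n}(x)=-\frac{n!}{(2\pi i)^{n}}\sum_{k\neq0}\frac{e^{2\pi ikx}}{k^{n}}.
\end{equation*}
For primitive $\chi$ modulo $q$, the standard Gauss-sum evaluation gives
\begin{equation*}
\overline{B}_{m}(x,\overline{\chi})=-\frac{m!\,\chi(-1)G(1,\overline{\chi})}{(2\pi i)^{m}}\sum_{k\neq0}\frac{\chi(k)e^{2\pi ikx/q}}{k^{m}}
\end{equation*}
(with the usual adjustment of the constant when $q=1$, which produces the $\chi(0)$ term). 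Substituting, the left side of \eqref{eq1.11} becomes a constant times
\begin{equation*}
\sum_{k,l\neq0}\frac{\chi(k)}{k^{m}l^{n}}\,e^{2\pi i[(ak+bl)x/q+ky/q+lz]}.
\end{equation*}
We split this double sum into the part with $ak+bl=0$ and the part with $ak+bl\neq0$.

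\textbf{Diagonal $ak+bl=0$.} Write $a=da'$ and $b=db'$ with $d=(a,b)$. The solutions are $k=-b't$ and $l=a't$ for $t\neq0$. Then $\chi(k)=\chi(-1)\chi(b')\chi(t)$, and the sum collapses to a single character-twisted series in $t$ with exponent $m+n$ and phase $t(-b'y+a'qz)/q$. Reading this back through the Fourier formula for $\overline{B}_{m+n}(\cdot,\overline{\chi})$ should give the $\chi(b/(a,b))\overline{B}_{m+n}((by-aqz)/(a,b),\overline{\chi})$ term, with coefficient $(a,b)^{m+n}/((aq)^nb^m)$ and the sign $(-1)^{n-1}$ coming from $(-b't)^m(a't)^n$.

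\textbf{Off-diagonal $ak+bl\neq0$.} Set $N=ak+bl$ and apply the partial-fraction identity
\begin{equation*}
\frac{1}{k^{m}l^{n}}=\sum_{j=1}^{m}\binom{m+n-j-1}{n-1}\frac{(\cdot)}{k^{j}N^{m+n-j}}+\sum_{j=1}^{n}\binom{m+n-j-1}{m-1}\frac{(\cdot)}{l^{j}N^{m+n-j}},
\end{equation*}
where the $(\cdot)$ collect powers of $a$ and $b$. In the first family, fix $k$ and let $N$ range over the values $\equiv ak \pmod{|b|}$. The congruence is detected with $\frac{1}{|b|}\sum_{l=1}^{|b|}e^{2\pi i l(N-ak)/b}$, and the character is opened as $\chi(k)=G(1,\overline{\chi})^{-1}\sum_{r}\overline{\chi}(r)e^{2\pi i rk/q}$. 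Each family then factors into a product of two one-variable Fourier series, namely $\overline{B}_j(\cdots)$ and $\overline{B}_{m+n-j}(\cdots)$, which yields the two triple sums. Because $N$ is unrestricted here, the terms $N=0$ have to be restored and then subtracted; these compensating terms, together with the terms $k$ or $l$ equal to $0$ that are included on one side but excluded on the other, produce the boundary terms $\overline{B}_{m+n}(ax+y,\overline{\chi})$ and $\chi(0)G(1,\overline{\chi})\overline{B}_{m+n}(bx/q+z)$. The binomial coefficients combine with factorial ratios to give the stated $n\binom{m}{j}/(m+n-j)$ and $m\binom{n}{j}/(m+n-j)$.

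The main obstacle is convergence and the case $m=n=1$. There the series are only conditionally convergent, so multiplying them and rearranging is not justified. The plan for this case is to prove the identity first on the set where $ax+y\notin\mathbb{Z}$ and $bx/q+z\notin\mathbb{Z}$, using symmetric partial sums (or Abel summation with a factor $e^{-\epsilon|k|}$). The remaining case is then obtained by averaging the two one-sided limits, which is exactly how the normalization of $\overline{B}_1$ works. This averaging is what creates the correction $-\frac14\delta_{\mathbb{Z}}\delta_{\mathbb{Z}}\,\overline{\chi}(-ax-y)\,\mathrm{sgn}(ab)$, because the product of two jumps of size $\frac12\cdot\frac12$ is not the product of the averages. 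Where $m+n\ge3$ I would rely on absolute convergence, and I would track the bookkeeping of signs ($\mathrm{sgn}(a)$, $\mathrm{sgn}(b)$, $\chi(-1)$) carefully.
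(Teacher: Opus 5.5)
Your algebra is right, and off the diagonal your route genuinely differs from the paper's. The paper works in these steps:
\begin{itemize}
\item it computes each Fourier coefficient $c_N(f)$ as a sum along the line $ak+bl=N$ and parametrizes it by $d$;
\item it splits this sum by partial fractions in $d$ and evaluates the twisted sums $\sum_d e^{2\pi i d\theta}(d+r/b)^{-j}$ in closed form by Lemmas 2.4--2.5, which rest on the Apostol--Bernoulli Fourier series and the multiplication formula (2.12);
\item it resums over $N$, and then needs Raabe's formula in (3.13)--(3.14) to pass from $l\le|b/(a,b)|$ to $l\le|b|$.
\end{itemize}
You instead fix $k$ (resp.\ $l$), let $N$ run over a residue class, and detect that class with roots of unity. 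This lands directly on $\sum_{l=1}^{|b|}\overline{B}_j(y-\frac{aq(l+z)}{b},\overline{\chi})\overline{B}_{m+n-j}(\frac{x}{q}+\frac{l+z}{b})$ with coefficient $nq^{m-j}a^{m-j}b^{n-1}\mathrm{sgn}(b)\binom{m}{j}/(m+n-j)$. That is exactly the theorem's first triple sum after (3.15). The restored $l=0$ and $k=0$ terms do give the two $\overline{B}_{m+n}$ boundary terms via Lemma 2.7. So you avoid Lemmas 2.4--2.5, the Diophantine parametrization off the diagonal, and the Raabe step.

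There are two slips. First, the Fourier series of $\overline{B}_m(x,\overline{\chi})$ is $-q^{m-1}m!\,G(1,\overline{\chi})(2\pi i)^{-m}\sum_{k\neq0}\chi(k)e^{2\pi ikx/q}k^{-m}$: there is no $\chi(-1)$, and the factor $q^{m-1}$ is needed. It holds for $q=1$ without adjustment; the $\chi(0)$ term comes only from the restored $k=0$ terms. Second, nothing with $N=0$ needs restoring, since the factored series $\sum_{N\neq0}$ already omits it.

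The genuine gap is the analytic justification. Absolute convergence is not available for $m+n\ge3$.
\begin{itemize}
\item If $\min(m,n)=1$, the product series already diverges absolutely.
\item Even for $m,n\ge2$, the $j=1$ pieces $\chi(k)/(kN^{m+n-1})$ are not absolutely summable over $(k,N)$. For each large $|k|$, the class $N\equiv ak\pmod{|b|}$ contains some $N$ with $0<|N|\le|b|$, so the total is $\gg\sum_k|\chi(k)|/|k|=\infty$.
\end{itemize}
Hence splitting into two families and factoring each requires a justified order of summation, and your two families are fibred differently ($k$ fixed versus $l$ fixed). An Abel factor $e^{-\epsilon(|k|+|l|)}$ does not factor in the variables $(k,N)$ either, so your $m=n=1$ plan has the same hole.

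The repair is to run your computation one Fourier coefficient at a time, as the paper does.
\begin{itemize}
\item For fixed $N\neq0$, the sum over $ak+bl=N$ has terms of size $|d|^{-(m+n)}$ and converges absolutely.
\item The partial-fraction pieces at fixed $N$ are single symmetric series.
\item Your roots-of-unity detection is a finite sum, so it can be freely interchanged; it identifies each piece with the $N$-th Fourier coefficient of the corresponding right-hand term.
\end{itemize}
Dirichlet--Jordan for the bounded-variation function $f$ then yields the identity with $\frac12(f(x^+)+f(x^-))$ on the left. Your jump computation converts this into the $-\frac14\mathrm{sgn}(ab)\delta_{\mathbb{Z}}\delta_{\mathbb{Z}}\overline{\chi}(-ax-y)$ term.
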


Clearly, the case $q=1$ in Theorem \ref{thm1.1} is consistent with the result in \cite[Theorem 1.1]{he1}. Moreover, it can be used to establish a reciprocity formula for the generalized Dedekind--Rademacher sums introduced by Hall, Wilson and Zagier \cite{hall}; see \cite[Theorem 1.2]{he1} for details.

As another application of Theorem \ref{thm1.1}, we are able to remove the hypothesis $(a,b)=(b,c)=(c,a)=1$ in the reciprocity formula of Can \cite[Theorem 2.2]{can}. Our result is stated as follows.

\begin{theorem}\label{thm1.2} Let $m,n,a,b,c,q\in\mathbb{N}$ with $q\geq 2$. Let $\chi_{1}$ and $\chi_{2}$ be non-principal primitive Dirichlet characters modulo $q$. Then, for $x,y,z\in\mathbb{R}$,
\begin{eqnarray}\label{eq1.12}
&&s_{m,n}(a,b,c;\chi_{1},\overline{\chi_{2}};x,y,z)\nonumber\\
&&\qquad=n\biggl(\frac{b}{cq}\biggl)^{n-1}\sum_{j=1}^{m}\binom{m}{j}\frac{(\frac{a}{c})^{m-j}}{m+n-j}s_{j,m+n-j}^{*}(-a,c,b;\overline{\chi_{2}},\chi_{1};x,z,y)\nonumber\\
&&\qquad\quad+m\biggl(\frac{a}{c}\biggl)^{m-1}\sum_{j=0}^{n}\binom{n}{j}\frac{(-1)^{j}(\frac{b}{cq})^{n-j}}{m+n-j}s_{m+n-j,j}(c,b,a;\overline{\chi_{2}},\chi_{1};z,-y,x)\nonumber\\
&&\qquad\quad-\frac{\delta_{1,m}\delta_{1,n}\chi_{2}(-1)}{4}\sum_{k=1}^{cq}\chi_{1}(k)\delta_{\mathbb{Z}}\biggl(\frac{a(k+z)}{c}+x\biggl)\delta_{\mathbb{Z}}\biggl(\frac{b(k+z)}{cq}+y\biggl)\nonumber\\
&&\qquad\qquad\times\overline{\chi_{2}}\biggl(\frac{a(k+z)}{c}+x\biggl).
\end{eqnarray}
\end{theorem}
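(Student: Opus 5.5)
The plan is to apply Theorem \ref{thm1.1} termwise inside the defining sum \eqref{eq1.9} and then sum over $r$ weighted by $\chi_{1}(r)$. Write
\[
s_{m,n}(a,b,c;\chi_{1},\overline{\chi_{2}};x,y,z)=\sum_{k=1}^{cq}\chi_{1}(k)\overline{B}_{m}\Bigl(\frac{a(k+z)}{c}+x,\overline{\chi_{2}}\Bigr)\overline{B}_{n}\Bigl(\frac{b(k+z)}{cq}+y\Bigr).
\]
For each fixed $k$ we apply \eqref{eq1.11} with $\chi=\chi_{2}$, variable $X=(k+z)/c$, coefficients $a,b$ (both positive, so $\mathrm{sgn}=1$), and shifts $y\mapsto x$, $z\mapsto y$. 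The product then splits into six pieces: two double sums over $j$, the two single terms with $\overline{B}_{m+n}$, the $(a,b)$-term, and the $\delta_{1,m}\delta_{1,n}$ correction. Multiplying by $\chi_{1}(k)$ and summing over $k$ from $1$ to $cq$ turns the correction directly into the last line of \eqref{eq1.12}; the prefactor $\chi_{2}(-1)$ there should come from rewriting $\overline{\chi_{2}}(-ax-y)$ as $\chi_{2}(-1)\overline{\chi_{2}}(\cdot)$.

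Next we dispose of the terms that should vanish. The term $\chi(0)G(1,\overline{\chi})\overline{B}_{m+n}$ is zero because $\chi_{2}$ is non-principal with $q\ge2$, so $\chi_{2}(0)=0$. The $(a,b)$-term does not depend on $k$ at all, so after summation it picks up the factor $\sum_{k=1}^{cq}\chi_{1}(k)=0$, since $\chi_{1}$ is non-principal. This is exactly why no coprimality hypothesis is needed.

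The remaining pieces are then matched with the starred and unstarred sums:
\begin{itemize}
\item The term $\frac{mb^{n}}{(aq)^{n}(m+n)}\overline{B}_{m+n}(\cdot,\overline{\chi_{2}})$, once summed against $\chi_{1}(k)$, gives the $j=0$ term of the second sum in \eqref{eq1.12}.
\item The second double sum of \eqref{eq1.11} (over $l\le a$ and $r\le q$) must be recombined with the $k$-sum. After the change of variables, the pair $(k,r)$ (or $(l,k)$) runs over a complete system, and the arguments take the shape $\frac{c(l'+x)}{a}+z$ and $\frac{b(l'+x)}{aq}-y$. Using the periodicity $\overline{B}_{j}(\cdot+1)=\overline{B}_{j}(\cdot)$, the $r$-sum with $\overline{\chi_{2}}(r)$ can be folded against $\overline{B}_{m+n-j}$ to reconstruct $\overline{B}_{m+n-j}(\cdot,\overline{\chi_{2}})$ via its definition. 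What is left should be the terms $j\ge1$ of $s_{m+n-j,j}(c,b,a;\overline{\chi_{2}},\chi_{1};z,-y,x)$. The $j=0$ term comes from the previous item.
\item For the first double sum (over $l\le b$), the $k$-sum together with $\chi_{1}(k)$ and $\overline{B}_{m+n-j}\bigl(\frac{k+z}{cq}+\frac{l+y}{b}\bigr)$ should collapse to $\overline{B}_{m+n-j}(\cdot,\chi_{1})$ by the definition of the generalized Bernoulli function. In the same way, the $r$-sum with $\overline{\chi_{2}}(r)$ produces $\overline{B}_{j}(\cdot,\overline{\chi_{2}})$. This yields $s^{*}_{j,m+n-j}(-a,c,b;\overline{\chi_{2}},\chi_{1};x,z,y)$; the signs $(-1)^{j}$ and $\chi(-1)$ are absorbed using $\overline{B}_{j}(-t,\chi)=(-1)^{j}\chi(-1)\overline{B}_{j}(t,\chi)$ with $-a$.
\end{itemize}

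The main obstacle is the bookkeeping in the last two items: getting the ranges right (the sum over $k\le cq$ must be split as $k=k_{0}+cq'$, with the other summation recombined into a complete residue system modulo $aq$ or $bq$), and tracking the powers of $q$, $a/c$ and $b/(cq)$. I would first try to match the coefficients $n(b/(cq))^{n-1}(a/c)^{m-j}$ against $nq^{m-1}b^{n-1}a^{m-j}$ times the $q^{-(\cdot)}$ normalizations of the two generalized Bernoulli functions. If the folding fails, the fallback is the multiplication (Raabe) formula $\sum_{l=0}^{N-1}\overline{B}_{p}\bigl(\frac{t+l}{N}\bigr)=N^{1-p}\overline{B}_{p}(t)$, together with its character analogue, to absorb the extra sums.
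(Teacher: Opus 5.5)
Your overall route is the paper's: apply \eqref{eq1.11} with $\chi=\chi_{2}$ at the point $(k+z)/c$, weight by $\chi_{1}(k)$, and sum over $k\le cq$. You then kill the $\chi_{2}(0)$-term and the $k$-independent $(a,b)$-term using $\chi_{2}(0)=0$ and $\sum_{k=1}^{cq}\chi_{1}(k)=0$, and convert what remains. Your handling of the correction term and of the first double sum is right. Note, though, that collapsing the $k$-sum there needs Raabe's formula \eqref{eq2.14} over the blocks $k=s+qt$, $0\le t\le c-1$, and not only the definition \eqref{eq2.8}.

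However, the mechanism you describe for the second double sum would fail. After your substitution, its $j$-th piece involves
\[
\sum_{k=1}^{cq}\chi_{1}(k)\sum_{r=1}^{q}\overline{\chi_{2}}(r)\sum_{l=1}^{a}\overline{B}_{j}\Bigl(\frac{b(ql-r+x)}{aq}-y\Bigr)\overline{B}_{m+n-j}\Bigl(\frac{k+z}{cq}+\frac{ql-r+x}{aq}\Bigr).
\]
So the $r$-sum cannot be folded into $\overline{B}_{m+n-j}$ alone, because $\overline{B}_{j}$ also depends on $r$. Moreover, the target $s_{m+n-j,j}(c,b,a;\overline{\chi_{2}},\chi_{1};z,-y,x)$ contains $\overline{B}_{m+n-j}(\cdot,\chi_{1})$, not $\overline{B}_{m+n-j}(\cdot,\overline{\chi_{2}})$.

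The roles are the other way round. The $k$-sum collapses (Raabe in $t$, then \eqref{eq2.8}) to $(cq)^{1-(m+n-j)}\overline{B}_{m+n-j}\bigl(\frac{c(ql-r+x)}{a}+z,\chi_{1}\bigr)$. It is the pair $(l,r)$---not $(k,r)$ or $(l,k)$---that is recombined, via $K=ql-r$, which runs over a complete residue system modulo $aq$. Then $\overline{\chi_{2}}(r)=\chi_{2}(-1)\overline{\chi_{2}}(K)$, and this $\chi_{2}(-1)$ cancels the prefactor $\chi(-1)=\chi_{2}(-1)$ of the second double sum in \eqref{eq1.11}. Your plan never accounts for that prefactor.

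Second, your first bullet only asserts that $\frac{mb^{n}}{(aq)^{n}(m+n)}\overline{B}_{m+n}(\cdot,\overline{\chi_{2}})$ becomes the $j=0$ summand. This is a nontrivial identity, equivalent to
\[
c^{N-1}\sum_{k=1}^{cq}\chi_{1}(k)\overline{B}_{N}\Bigl(\frac{a(k+z)}{c}+x,\overline{\chi_{2}}\Bigr)=a^{N-1}\sum_{K=1}^{aq}\overline{\chi_{2}}(K)\overline{B}_{N}\Bigl(\frac{c(K+x)}{a}+z,\chi_{1}\Bigr),\qquad N=m+n.
\]
It follows by expanding both sides with \eqref{eq2.8} and applying \eqref{eq4.4}.

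The paper avoids proving this separately. By the renamed form of \eqref{eq3.16} (Raabe in $l$ plus \eqref{eq2.8}), it rewrites $\overline{B}_{m+n}\bigl(\frac{a(k+z)}{c}+x,\overline{\chi_{2}}\bigr)$ as
\[
(aq)^{m+n-1}\chi_{2}(-1)\sum_{r=1}^{q}\overline{\chi_{2}}(r)\sum_{l=1}^{a}\overline{B}_{m+n}\Bigl(\frac{k+z}{cq}+\frac{l}{a}+\frac{x-r}{aq}\Bigr).
\]
This is exactly the $j=0$ instance of the second double sum, so the corrected conversion above handles all $0\le j\le n$ at once. With these two repairs your plan becomes the paper's proof.
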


In order to generalize Berndt's reciprocity formulas \eqref{eq1.7} and \eqref{eq1.8} in the same spirit as Theorem \ref{thm1.2}, we introduce the following three sums:
\begin{equation*}
s_{m,n}^{(1)}(a,b,c;\chi;x,y,z),\quad s_{m,n}^{(2)}(a,b,c;\chi;x,y,z),\quad s_{m,n}^{(3)}(a,b,c;\chi;x,y,z),
\end{equation*}
defined for a Dirichlet character $\chi$ modulo $q$ ($q\in\mathbb{N}$), and for $m,n\in\mathbb{N}_{0}$, $a,b,c\in\mathbb{Z}$ with $c\geq1$, and $x,y,z\in\mathbb{R}$, by
\begin{equation}\label{eq1.13}
s_{m,n}^{(1)}(a,b,c;\chi;x,y,z)=\sum_{r=1}^{cq}\chi(r)\overline{B}_{m}\biggl(\frac{a(r+z)}{c}+x\biggl)\overline{B}_{n}\biggl(\frac{b(r+z)}{cq}+y\biggl),
\end{equation}
\begin{equation}\label{eq1.14}
s_{m,n}^{(2)}(a,b,c;\chi;x,y,z)=\sum_{r=1}^{cq}\overline{B}_{m}\biggl(\frac{a(r+z)}{c}+x,\chi\biggl)\overline{B}_{n}\biggl(\frac{b(r+z)}{cq}+y\biggl),
\end{equation}
and
\begin{equation}\label{eq1.15}
s_{m,n}^{(3)}(a,b,c;\chi;x,y,z)=\sum_{r=1}^{c}\overline{B}_{m}\biggl(\frac{aq(r+z)}{c}+x,\chi\biggl)\overline{B}_{n}\biggl(\frac{bq(r+z)}{c}+y\biggl).
\end{equation}
Applying Theorem \ref{thm1.1}, we obtain a reciprocity formula relating the sums \eqref{eq1.13}, \eqref{eq1.14} and \eqref{eq1.15}. This formula recovers \eqref{eq1.7} and \eqref{eq1.8}, as well as another reciprocity formulas of Berndt in \cite[Corollary 8.5]{berndt4}, as special cases. The precise statement reads as follows.

\begin{theorem}\label{thm1.3} Let $m,n,a,b,c,q\in\mathbb{N}$ with $q\geq 2$. Let $\chi$ be a non-principal primitive Dirichlet character modulo $q$. Then, for $x,y,z\in\mathbb{R}$,
\begin{eqnarray}\label{eq1.16}
&&s_{m,n}^{(2)}(a,b,c;\chi;x,y,z)\nonumber\\
&&\qquad=n\biggl(\frac{b}{cq}\biggl)^{n-1}\sum_{j=1}^{m}\binom{m}{j}\frac{(\frac{a}{c})^{m-j}}{m+n-j}s_{j,m+n-j}^{(3)}(-a,c,b;\chi;x,z,y)\nonumber\\
&&\qquad\quad+m\biggl(\frac{a}{c}\biggl)^{m-1}\sum_{j=0}^{n}\binom{n}{j}\frac{(-1)^{j}(\frac{b}{cq})^{n-j}}{m+n-j}s_{m+n-j,j}^{(1)}(c,b,a;\chi;z,-y,x)\nonumber\\
&&\qquad\quad+\frac{(-1)^{n-1}m!n!(a,b)^{m+n}c\overline{\chi}(\frac{b}{(a,b)})\overline{B}_{m+n}\bigl(\frac{bx-aqy}{(a,b)},\chi\bigl)}{a^{n}q^{n-1}b^{m}(m+n)!}\nonumber\\
&&\qquad\quad-\frac{\delta_{1,m}\delta_{1,n}\chi(-1)}{4}\sum_{k=1}^{cq}\delta_{\mathbb{Z}}\biggl(\frac{a(k+z)}{c}+x\biggl)\delta_{\mathbb{Z}}\biggl(\frac{b(k+z)}{cq}+y\biggl)\nonumber\\
&&\qquad\qquad\times\chi\biggl(\frac{a(k+z)}{c}+x\biggl).
\end{eqnarray}
\end{theorem}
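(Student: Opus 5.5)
The plan is to specialize Theorem~\ref{thm1.1} at the points appearing in $s^{(2)}_{m,n}(a,b,c;\chi;x,y,z)$ and then sum over $r$. Theorem~\ref{thm1.1} is stated for $\overline{B}_m(\cdot,\overline{\chi})$, so we apply it with $\chi$ replaced by $\overline{\chi}$; this is still primitive modulo $q$. In \eqref{eq1.11} we make the substitutions
\[
a\mapsto \frac{a}{c}\ \text{(handled after scaling)},\qquad x\mapsto r+z,\qquad y\mapsto x,\qquad b\mapsto \frac{b}{c},\qquad z\mapsto y .
\]
The integrality of $a,b$ in Theorem~\ref{thm1.1} forces a cleaner normalization. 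We use integer parameters $a,b$ and variable $X=(r+z)/c$, so the left side of \eqref{eq1.11} becomes
\[
\overline{B}_m\Bigl(\frac{a(r+z)}{c}+x,\chi\Bigr)\,\overline{B}_n\Bigl(\frac{b(r+z)}{cq}+y\Bigr),
\]
which is exactly the summand of \eqref{eq1.14}. We then sum both sides over $r=1,\dots,cq$.

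On the right side we treat the terms one at a time.

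\begin{itemize}
\item \emph{First double sum.} The inner sum over $l=1,\dots,b$ and $r'=1,\dots,q$ (weighted by $\chi(r')$) has arguments
\[
\frac{a(l+y)}{b}-\frac{x-r'}{q}\qquad\text{and}\qquad \frac{r+z}{cq}+\frac{l+y}{b}.
\]
Summing over $r$ modulo $cq$ of $\overline{B}_{m+n-j}\bigl((r+z)/(cq)+\cdot\bigr)$ is handled by the Raabe multiplication formula
\[
\sum_{r=1}^{N}\overline{B}_k\Bigl(\frac{r+t}{N}\Bigr)=N^{1-k}\,\overline{B}_k(t).
\]
This collapses the $r$-sum to one Bernoulli function of $z+cq(l+y)/b$. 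Next we rewrite $\sum_{r'}\chi(r')\overline{B}_j\bigl(\,\cdot\,+r'/q\bigr)$ via the definition of $\overline{B}_j(\cdot,\chi)$, which gives $q^{1-j}\overline{B}_j\bigl(aq(l+y)/b-x,\chi\bigr)$ up to sign. Using $\overline{B}_j(-t,\chi)=(-1)^j\chi(-1)\overline{B}_j(t,\chi)$ removes the factors $(-1)^j\chi(-1)$. The result should be precisely $s^{(3)}_{j,m+n-j}(-a,c,b;\chi;x,z,y)$ with the stated coefficient $n(b/(cq))^{n-1}\binom{m}{j}(a/c)^{m-j}/(m+n-j)$.
\item \emph{Second double sum.} Here the $r$-sum combines with the $l$-sum over $1,\dots,a$ and the character sum over $r'$. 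Since $\chi(r')$ is paired with $r'$ modulo $q$, we expect the pair $(r',l)$ to reassemble as a single index modulo $aq$. The $r$-sum then again collapses by the multiplication formula, producing $s^{(1)}_{m+n-j,j}(c,b,a;\chi;z,-y,x)$ for $j\ge1$.
\item \emph{Term $\overline{B}_{m+n}(ax+y,\overline{\chi})$.} Summed over $r$, this supplies the $j=0$ term of that same sum. This explains why the second sum in \eqref{eq1.16} starts at $j=0$, unlike the first.
\item \emph{Term with $\chi(0)G(1,\overline{\chi})$.} This vanishes because $\chi$ is non-principal and $q\ge2$, so $\chi(0)=0$.
\item \emph{Term independent of $r$.} The $(a,b)$-term does not depend on $r$, so summing over $r$ multiplies it by $cq$. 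This gives the stated constant, after simplifying with the conjugate character and the substitution $(y,z)\mapsto(x,y)$.
\item \emph{Delta term.} This sums directly to the last line of \eqref{eq1.16}.
\end{itemize}

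The main obstacle is the second double sum. There one must check carefully that
\[
\frac{b(l+(x-r')/q)}{a}-y\qquad\text{and}\qquad \frac{r+z}{cq}+\frac{l}{a}+\frac{x-r'}{aq},
\]
after summing over $r$ modulo $cq$, recombine into the form $\chi(k)\,\overline{B}_{m+n-j}\bigl(c(k+x)/a+z\bigr)\,\overline{B}_j\bigl(b(k+x)/(aq)-y\bigr)$ with $k=ql-r'$ running over $1,\dots,aq$. This recombination requires periodicity of $\chi$ and $\chi(-1)^2=1$. Getting the signs $(-1)^j$ and the powers of $a,c,q$ to match the coefficient $m(a/c)^{m-1}(b/(cq))^{n-j}$ is the delicate bookkeeping step. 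I would first verify it in the case $m=n=1$ against Berndt's formula \eqref{eq1.7}.
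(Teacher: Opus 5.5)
Your proposal is correct and is essentially the paper's proof. The paper also substitutes $x\mapsto (k+x)/c$ in Theorem~\ref{thm1.1} and sums over $k=1,\dots,cq$. It collapses the $k$-sums with Raabe's formula \eqref{eq2.14} and converts the character sums via \eqref{eq3.15}--\eqref{eq3.16}, which amount to your parity relation for $\overline{B}_j(\cdot,\chi)$ and your absorption of the $\overline{B}_{m+n}(\cdot,\overline{\chi})$ term as the $j=0$ summand. It then reassembles $(r,l)\mapsto ql-r$ modulo $aq$ exactly as in \eqref{eq3.20}. The only cosmetic difference is that you plug in the final variable names directly, whereas the paper substitutes first and renames $x\to z$, $y\to x$, $z\to y$, $\overline{\chi}\to\chi$ at the end.
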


Finally, as a natural complement to Theorem \ref{thm1.2}, we introduce two more sums
$\widehat{s}_{m,n}(a,b,c;\chi_{1},\chi_{2};x,y,z)$ and $\widetilde{s}_{m,n}(a,b,c;\chi_{1},\chi_{2};x,y,z)$,
defined for Dirichlet characters $\chi_{1}$ modulo $q_{1}$ and $\chi_{2}$ modulo $q_{2}$ ($q_{1},q_{2}\in\mathbb{N}$), and for $m,n\in\mathbb{N}_{0}$, $a,b,c\in\mathbb{Z}$ with $c\geq1$, and $x,y,z\in\mathbb{R}$, by
\begin{eqnarray}\label{eq1.17}
&&\widehat{s}_{m,n}(a,b,c;\chi_{1},\chi_{2};x,y,z)\nonumber\\
&&\qquad=\sum_{r=1}^{cq_{1}q_{2}}\chi_{1}(r)\overline{B}_{m}\biggl(\frac{a(r+z)}{c}+x,\chi_{2}\biggl)\overline{B}_{n}\biggl(\frac{b(r+z)}{cq_{1}q_{2}}+y\biggl),
\end{eqnarray}
and
\begin{eqnarray}\label{eq1.18}
&&\widetilde{s}_{m,n}(a,b,c;\chi_{1},\chi_{2};x,y,z)\nonumber\\
&&\qquad=\sum_{r=1}^{c}\overline{B}_{m}\biggl(\frac{aq_{1}q_{2}(r+z)}{c}+x,\chi_{1}\biggl)\overline{B}_{n}\biggl(\frac{bq_{1}q_{2}(r+z)}{c}+y,\chi_{2}\biggl),
\end{eqnarray}
Just as the sums \eqref{eq1.9} and \eqref{eq1.10} do, the sums \eqref{eq1.17} and \eqref{eq1.18} satisfy a reciprocity formula of the same type, stated in the following theorem.

\begin{theorem}\label{thm1.4}Let $m,n,a,b,c,q_{1},q_{2}\in\mathbb{N}$ with $q_{1}\geq 2$ and $q_{2}\geq2$. Let $\chi_{1}$ and $\chi_{2}$ be non-principal primitive Dirichlet characters modulo $q_{1}$ and $q_{2}$, respectively. Then, for $x,y,z\in\mathbb{R}$,
\begin{eqnarray}\label{eq1.19}
&&\widehat{s}_{m,n}(a,b,c;\chi_{1},\overline{\chi_{2}};x,y,z)\nonumber\\
&&\qquad=n\biggl(\frac{b}{cq_{1}q_{2}}\biggl)^{n-1}\sum_{j=1}^{m}\binom{m}{j}\frac{(\frac{a}{c})^{m-j}}{m+n-j}
\widetilde{s}_{j,m+n-j}(-a,c,b;\overline{\chi_{2}},\chi_{1};x,z,y)\nonumber\\
&&\qquad\quad+m\biggl(\frac{a}{c}\biggl)^{m-1}\sum_{j=0}^{n}\binom{n}{j}\frac{(-1)^{j}(\frac{b}{cq_{1}q_{2}})^{n-j}}{m+n-j}\widehat{s}_{m+n-j,j}(c,b,a;\overline{\chi_{2}},\chi_{1};z,-y,x)\nonumber\\
&&\qquad\quad-\frac{\delta_{1,m}\delta_{1,n}\chi_{2}(-1)}{4}\sum_{k=1}^{cq_{1}q_{2}}\chi_{1}(k)\delta_{\mathbb{Z}}\biggl(\frac{a(k+z)}{c}+x\biggl)
\delta_{\mathbb{Z}}\biggl(\frac{b(k+z)}{cq_{1}q_{2}}+y\biggl)\nonumber\\
&&\qquad\qquad\times\overline{\chi_{2}}\biggl(\frac{a(k+z)}{c}+x\biggl).
\end{eqnarray}
\end{theorem}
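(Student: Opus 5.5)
Theorem \ref{thm1.4} can be handled exactly as Theorem \ref{thm1.2}: apply Theorem \ref{thm1.1} pointwise to each summand of $\widehat{s}_{m,n}(a,b,c;\chi_{1},\overline{\chi_{2}};x,y,z)$, multiply by $\chi_{1}(r)$, sum over $r$, and identify the resulting double sums with the sums \eqref{eq1.17} and \eqref{eq1.18}. Since $\chi_{2}$ is primitive modulo $q_{2}$, Theorem \ref{thm1.1} is applied with modulus $q_{2}$ and character $\chi=\chi_{2}$. Writing $Q=q_{1}q_{2}$, the summand is
\[
\overline{B}_{m}\Bigl(\frac{a(r+z)}{c}+x,\overline{\chi_{2}}\Bigr)\overline{B}_{n}\Bigl(\frac{b(r+z)}{cQ}+y\Bigr).
\]
To match the shape $\overline{B}_{m}(aX+Y,\overline{\chi})\overline{B}_{n}(\frac{bX}{q}+Z)$ with $q=q_{2}$, I set $X=(r+z)/c$, $Y=x$, $Z=y$, and replace $b$ by $b/q_{1}$. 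Since Theorem \ref{thm1.1} needs integer coefficients, I will instead take $X=(r+z)/(cq_{1})$, first argument coefficient $aq_{1}$, and second coefficient $b$ with modulus $q_{2}$. So I apply Theorem \ref{thm1.1} with $(a,b,x,y,z)\mapsto(aq_{1},b,\frac{r+z}{cq_{1}},x,y)$. Here $\operatorname{sgn}(a)=\operatorname{sgn}(b)=1$, and $\chi_{2}(0)=0$ because $q_{2}\ge 2$. This kills the term involving $G(1,\overline{\chi_2})$.

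The steps, in order, are as follows.

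(i) In the first group of terms from \eqref{eq1.11}, the inner sum over $l=1,\dots,b$ and $r'$ modulo $q_{2}$ of $\overline{\chi_{2}}(r')\overline{B}_{j}(\cdot)$ reassembles, via the definition $\overline{B}_{j}(x,\chi)=q^{j-1}\sum\chi(r)\overline{B}_{j}((r+x)/q)$, into a generalized Bernoulli function $\overline{B}_{j}(\cdot,\overline{\chi_{2}})$. Summing $\chi_{1}(r)\overline{B}_{m+n-j}(\frac{r+z}{cQ}+\frac{l+y}{b})$ over $r$ modulo $cQ$ then gives, by the distribution relation run backwards, $\overline{B}_{m+n-j}(\cdot,\chi_{1})$ evaluated on a lattice of step $Q/b$. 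After interchanging the roles of $r$ and $l$, these should combine into $\widetilde{s}_{j,m+n-j}(-a,c,b;\overline{\chi_{2}},\chi_{1};x,z,y)$. The powers $q_{2}^{m-1}b^{n-1}$, the factor $\chi_{2}(-1)$, and $(-1)^{j}$ (absorbed by $a\to-a$ and oddness/evenness of $\overline{B}_j(\cdot,\chi)$) must produce the coefficient $n(b/(cQ))^{n-1}\binom{m}{j}(a/c)^{m-j}/(m+n-j)$.

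(ii) The second group, with $l$ running modulo $aq_{1}$, reorganizes similarly into $\widehat{s}_{m+n-j,j}(c,b,a;\overline{\chi_{2}},\chi_{1};z,-y,x)$ for $j\ge1$. The term $\frac{mb^{n}\overline{B}_{m+n}(\cdot,\overline{\chi_{2}})}{(aq)^n(m+n)}$, after summing against $\chi_{1}(r)$, supplies the missing $j=0$ term, since $\overline{B}_{0}=1$.

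(iii) The term with $\chi(\frac{b}{(a,b)})\overline{B}_{m+n}(\ldots,\overline{\chi_{2}})$ does not depend on $r$. Its contribution is therefore a multiple of $\sum_{r=1}^{cQ}\chi_{1}(r)$, which vanishes because $\chi_{1}$ is non-principal. This is why, unlike Theorem \ref{thm1.3}, no such term survives.

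(iv) The $\delta_{1,m}\delta_{1,n}$ term transcribes directly into the last line of \eqref{eq1.19}, using $\overline{\chi_2}(-t)=\chi_{2}(-1)\overline{\chi_{2}}(t)$.

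The main obstacle is step (i) and step (ii): converting the double sums over $(r,l,r')$ into $\widetilde{s}$ and $\widehat{s}$. This requires changes of variables modulo $cQ$, $b$ and $aq_{1}$ that work without coprimality assumptions. The approach I would try first is to write $r=r_{0}+cQ t$ style decompositions, and to repeatedly use the multiplication (Raabe) formula $\sum_{k=0}^{N-1}\overline{B}_{n}(x+k/N)=N^{1-n}\overline{B}_{n}(Nx)$ together with its character analogue, $\sum_{r=1}^{q_{1}}\chi_{1}(r)\overline{B}_{n}(\frac{r+x}{q_{1}})=q_{1}^{1-n}\overline{B}_{n}(x,\chi_{1})$. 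The counterpart of this bookkeeping was presumably carried out in the proof of Theorem \ref{thm1.2} for $q_{1}=q_{2}=q$, and I would mirror it with $Q=q_{1}q_{2}$ in place of $q$, keeping careful track of the powers of $q_{1}$ and $q_{2}$.
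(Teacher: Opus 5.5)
Your proposal is correct and follows the paper's proof. The paper likewise applies Theorem \ref{thm1.1} with $\chi=\chi_{2}$ and modulus $q_{2}$, substitutes $x\mapsto (k+x)/(cq_{1})$, and sums against $\chi_{1}(k)$ over $k=1,\dots,cq_{1}q_{2}$. It drops the $\chi_{2}(0)G(1,\overline{\chi_{2}})$ term and, by orthogonality, the $x$-independent term. The $\overline{B}_{m+n}(\cdot,\overline{\chi_{2}})$ term becomes the $j=0$ summand, and the reflection and distribution identities \eqref{eq3.21}, \eqref{eq3.22}, \eqref{eq3.24}, \eqref{eq3.25} identify $\widetilde{s}$ and $\widehat{s}$. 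The only difference is cosmetic: the paper keeps a general coefficient $a$ and substitutes $a\mapsto aq_{1}$ at the end, whereas you build in $aq_{1}$ from the start.
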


This paper is organized as follows. In Section \ref{sec2}, we give some preliminary results. Section \ref{sec3} is devoted to the detailed proofs of Theorems \ref{thm1.1}, \ref{thm1.2}, \ref{thm1.3} and \ref{thm1.4}. In Section \ref{sec4}, we discuss some special cases of Theorems \ref{thm1.2}, \ref{thm1.3} and \ref{thm1.4}. We show that Berndt's reciprocity formulas \eqref{eq1.4}, \eqref{eq1.7} and \eqref{eq1.8}, as well as the reciprocity formulas in \cite{berndt4,cenkci,dagli}, hold as special cases.

\section{Some preliminary results}\label{sec2}

We first recall the following definitions of the generalized Apostol-Bernoulli polynomials and the generalized Apostol-Bernoulli functions, given in \cite[Definitions 2.1 and 2.7]{he2}.

\begin{definition}\label{def2.1} Let $n\in\mathbb{N}_{0}$, $q\in\mathbb{N}$, and $\lambda\in\mathbb{C}\setminus\{0\}$. For a Dirichlet character $\chi$ modulo $q$, the generalized Apostol-Bernoulli polynomials $\beta_{n,\chi}(x,\lambda)$ are defined by the generating function
\begin{equation}\label{eq2.1}
\sum_{r=1}^{q}\frac{\chi(r)\lambda^{r}te^{(r+x)t}}{\lambda^{q}e^{qt}-1}=\sum_{n=0}^{\infty}\beta_{n,\chi}(x,\lambda)\frac{t^{n}}{n!}\quad(|t+\log\lambda|<2\pi/q),
\end{equation}
where $\lambda^{z}$ is the general power function defined by $\lambda^{z}=e^{z\log\lambda}$ for $z\in\mathbb{C}$ with the principal branch of the logarithm satisfying
\begin{equation*}
\log \lambda=\log|\lambda|+\mathrm{i}\arg \lambda \quad(-\pi<\arg \lambda\leqslant\pi).
\end{equation*}
In particular, $\beta_{n,\chi}(\lambda)=\beta_{n,\chi}(0,\lambda)$ are called the generalized Apostol-Bernoulli numbers.
\end{definition}

\begin{definition}\label{def2.2} Let $n\in\mathbb{N}_{0}$, $q\in\mathbb{N}$, $x\in\mathbb{R}$, and $\lambda\in\mathbb{C}\setminus\{0\}$. For a Dirichlet character $\chi$ modulo $q$, the generalized Apostol-Bernoulli functions $\overline{\beta}_{n,\chi}(x,\lambda)$ are defined by
\begin{equation}\label{eq2.2}
\overline{\beta}_{n,\chi}(x,\lambda)=
\beta_{n,\chi}(\{x\}_{\chi},\lambda)+\frac{1}{2}\delta_{1,n}\delta_{\mathbb{Z}}(x)\lambda^{-x}\chi(-x),
\end{equation}
where
\begin{equation*}
\beta_{n,\chi}(\{x\}_{\chi},\lambda)=q^{n-1}\sum_{k=1}^{q}\chi(k)\lambda^{k-q[\frac{x+k}{q}]}\beta_{n}\biggl(\biggl\{\frac{x+k}{q}\biggl\},\lambda^{q}\biggl),
\end{equation*}
and $\beta_{n}(x,\lambda)$ are the Apostol-Bernoulli polynomials (for $q=1$ in Definition \ref{def2.1}, they satisfy $\lambda\beta_{n}(x,\lambda)=\beta_{n,\chi}(x-1,\lambda))$.
\end{definition}

It should be noted that Definition \ref{def2.1} includes information on the Bernoulli, Euler, Apostol-Bernoulli, Apostol-Euler, generalized Bernoulli, and generalized Euler polynomials, while Definition \ref{def2.2} includes information on the corresponding functions. It is also shown in \cite{he2} that the generalized Apostol-Bernoulli functions have the following Fourier series.

\begin{lemma}\label{lem2.3} Let $n,q\in\mathbb{N}$, $x\in\mathbb{R}$, and $\lambda\in\mathbb{C}\setminus\{0\}$. Let $\chi$ be a Dirichlet character modulo $q$. Then
\begin{equation}\label{eq2.3}
\overline{\beta}_{n,\chi}(x,\lambda)=-\frac{q^{n-1}n!}{\lambda^{x}(2\pi \mathrm{i})^{n}}\sideset{}{'}\sum_{k=-\infty}^{+\infty}\frac{G(k,\chi)e^{\frac{2\pi\mathrm{i}kx}{q}}}{(k-\frac{q\log\lambda}{2\pi\mathrm{i}})^{n}},
\end{equation}
where the dash denotes throughout that undefined terms are excluded from the
sum, and $G(k,\chi)$ is the Gauss sum defined by
\begin{equation*}
G(k,\chi)=\sum_{r=1}^{q}\chi(r)e^{\frac{2\pi \mathrm{i}rk}{q}}\quad(k\in\mathbb{Z}).
\end{equation*}
\end{lemma}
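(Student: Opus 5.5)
The plan is to reduce \eqref{eq2.3} to the Fourier expansion of the ordinary Apostol-Bernoulli function (the case $q=1$) and then to average over the residues $k$ modulo $q$ against $\chi(k)$. Throughout I write $\mu^{u}=e^{uq\log\lambda}$, so that $\mu=\lambda^{q}$. The powers of $\lambda^{q}$ are always taken as $e^{(\cdot)q\log\lambda}$ rather than through the principal logarithm of $\lambda^{q}$. This is what makes the denominators come out as $k-\frac{q\log\lambda}{2\pi\mathrm{i}}$ and avoids a branch shift.

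\emph{Step 1 (the case $q=1$).} I first show that for $u\in\mathbb{R}$,
\begin{equation*}
e^{q\log\lambda\{u\}}\beta_{n}(\{u\},\lambda^{q})+\tfrac12\delta_{1,n}\delta_{\mathbb{Z}}(u)
=-\frac{n!}{(2\pi\mathrm{i})^{n}}\sideset{}{'}\sum_{m}\frac{e^{2\pi\mathrm{i}mu}}{(m-\frac{q\log\lambda}{2\pi\mathrm{i}})^{n}}.
\end{equation*}
The left side is $1$-periodic in $u$. I compute its Fourier coefficients $\int_{0}^{1}\mu^{u}\beta_{n}(u,\mu)e^{-2\pi\mathrm{i}mu}\,du$ from the generating function: multiply by $t^{n}/n!$ and sum over $n$. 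Since $\sum_{n}\beta_n(u,\mu)t^n/n!=te^{ut}/(\mu e^{t}-1)$, the integral is elementary and equals
\begin{equation*}
\frac{t}{\mu e^{t}-1}\cdot\frac{\mu e^{t}-1}{t+q\log\lambda-2\pi\mathrm{i}m}=\frac{t}{t-(2\pi\mathrm{i}m-q\log\lambda)}.
\end{equation*}
Expanding this in $t$ and comparing coefficients gives the coefficient $-n!/(2\pi\mathrm{i}m-q\log\lambda)^{n}$ for $n\ge1$. This is exactly the stated coefficient, except at the single index $m$ with $2\pi\mathrm{i}m=q\log\lambda$ (possible only when $\lambda^{q}=1$). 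In that case the function $t/(e^{t}-1)$ shows the coefficient vanishes for $n\ge1$, which is precisely the excluded term indicated by the dash.

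For $n\ge2$ the series converges absolutely and the function is continuous, so equality holds pointwise. For $n=1$ I use the Dirichlet--Jordan theorem with symmetric partial sums. At integers $u$ the series gives the midpoint of the jump, and the term $\tfrac12\delta_{1,n}\delta_{\mathbb{Z}}$ corrects $\beta_{1}(0,\mu)$ to that midpoint. To check this I use the relation $\mu\beta_{1}(1,\mu)-\beta_{1}(0,\mu)=1$, so the jump of $\mu^{u}\beta_1(u,\mu)$ at an integer has size $1$.

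\emph{Step 2 (averaging).} Next I multiply Definition \ref{def2.2} by $\lambda^{x}$ and set $u_{k}=(x+k)/q$. This gives $\lambda^{x}\lambda^{k-q[u_{k}]}=\lambda^{q\{u_k\}}$, so
\begin{equation*}
\lambda^{x}\beta_{n,\chi}(\{x\}_{\chi},\lambda)=q^{n-1}\sum_{k=1}^{q}\chi(k)\,\mu^{\{u_k\}}\beta_{n}(\{u_k\},\mu).
\end{equation*}
I insert Step 1 for each $k$ and interchange the finite $k$-sum with the $m$-sum. The character sum collapses to $\sum_{k}\chi(k)e^{2\pi\mathrm{i}m(x+k)/q}=G(m,\chi)e^{2\pi\mathrm{i}mx/q}$, which yields \eqref{eq2.3} after dividing by $\lambda^{x}$.

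\emph{Step 3 (the correction term).} For $n=1$ the correction terms from Step 1 contribute $\tfrac12 q^{0}\sum_{k}\chi(k)\delta_{\mathbb{Z}}(u_k)$. This is nonzero only when $x\in\mathbb{Z}$, and then only for the single $k\equiv -x\pmod q$. So the contribution is $\tfrac12\chi(-x)$, which after the factor $\lambda^{-x}$ is exactly the extra term in \eqref{eq2.2}.

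I expect the main obstacle to be the $n=1$ boundary behaviour together with the excluded index when $\lambda^{q}=1$. There I will verify the jump size carefully via the generating function, and check that the vanishing coefficient matches the dash convention.
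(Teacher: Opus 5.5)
Your proof is correct. It is a genuine proof rather than a variant of one in the text: the paper does not prove Lemma~\ref{lem2.3} at all, it only points to \cite[Theorem 3.1]{he2}.

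Your route goes through as written: you compute the Fourier coefficients of the $1$-periodic function $e^{q\{u\}\log\lambda}\beta_n(\{u\},\lambda^{q})$ from the generating function, then put $u=(x+k)/q$ and average against $\chi(k)$, so that the character sum collapses to $G(m,\chi)e^{2\pi\mathrm{i}mx/q}$. The individual steps check out:
\begin{itemize}
\item The identity $e^{t+q\log\lambda-2\pi\mathrm{i}m}=\lambda^{q}e^{t}$ holds because you read every power of $\lambda^{q}$ as $e^{(\cdot)q\log\lambda}$. This matches what the factor $\lambda^{x}\lambda^{k-q[(x+k)/q]}=e^{q\{(x+k)/q\}\log\lambda}$ in Definition~\ref{def2.2} produces.
\item When $\lambda^{q}=1$, the unique excluded index has vanishing coefficient for $n\ge1$, since the integral is identically $1$ in $t$. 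This matches the dash convention.
\item For $n=1$, the jump $\mu\beta_1(1,\mu)-\beta_1(0,\mu)=1$ (with $\mu=\lambda^{q}$) puts the Dirichlet--Jordan midpoint at $\beta_1(0,\mu)+\tfrac12$. After averaging this yields exactly $\tfrac12\delta_{\mathbb{Z}}(x)\lambda^{-x}\chi(-x)$.
\end{itemize}

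Two small things should be made explicit:
\begin{itemize}
\item Continuity at integers for $n\ge2$ comes from the same identity $\mu\beta_n(1,\mu)-\beta_n(0,\mu)=\delta_{1,n}$, since the difference of the two generating functions is $t$.
\item For $n=1$ the dashed series has to be read with symmetric partial sums. Recentering them at the excluded index changes only finitely many $O(1/N)$ terms, so nothing is lost.
\end{itemize}

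Compared with the bare citation, your version shows that the two conventions built into Definition~\ref{def2.2} are exactly what \eqref{eq2.3} requires: the consistent branch $q\log\lambda$, and the midpoint value at integers. It also shows that primitivity of $\chi$ plays no role in this lemma; it only enters afterwards, through \eqref{eq2.4}.
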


\begin{proof}
See \cite[Theorem 3.1]{he2} for details.
\end{proof}

Clearly, for a primitive character $\chi$ modulo $q$ ($q\in\mathbb{N}$), the following identity holds (see, e.g., \cite[Theorem 8.15]{apostol}):
\begin{equation}\label{eq2.4}
G(k,\chi)=\overline{\chi}(k)G(1,\chi)\quad(k\in\mathbb{Z}).
\end{equation}
Hence, taking $\lambda=1$ in Lemma \ref{lem2.3} and using \eqref{eq2.4}, we obtain that, for a primitive Dirichlet character $\chi$ modulo $q$ ($q\in\mathbb{N}$), and for $n\in\mathbb{N}$ and $x\in\mathbb{R}$,
\begin{equation}\label{eq2.5}
\overline{B}_{n}(x,\overline{\chi})=-\frac{q^{n-1}n!G(1,\overline{\chi})}{(2\pi \mathrm{i})^{n}}\sideset{}{'}\sum_{k=-\infty}^{+\infty}\frac{\chi(k)e^{\frac{2\pi\mathrm{i}kx}{q}}}{k^{n}}.
\end{equation}
In particular, the case $q=1$ in \eqref{eq2.5} gives the Fourier series of the Bernoulli functions:
\begin{equation}\label{eq2.6}
\overline{B}_{n}(x)=-\frac{n!}{(2\pi \mathrm{i})^{n}}\sideset{}{'}\sum_{k=-\infty}^{+\infty}\frac{e^{2\pi\mathrm{i}kx}}{k^{n}}\quad(n\in\mathbb{N},x\in\mathbb{R}).
\end{equation}
It is worth noting that \eqref{eq2.6} implies
\begin{equation}\label{eq2.7}
\overline{B}_{n}(-x)=(-1)^{n}\overline{B}_{n}(x)\quad(n\in\mathbb{N},x\in\mathbb{R}).
\end{equation}
Moreover, for a Dirichlet character $\chi$ modulo $q$ ($q\in\mathbb{N}$), and for $n\in\mathbb{N}$ and $x\in\mathbb{R}$, we see that
\begin{equation}\label{eq2.8}
q^{n-1}\sum_{r=1}^{q}\overline{\chi}(r)\overline{B}_{n}\biggl(\frac{x+r}{q}\biggl)=\overline{B}_{n}(x,\overline{\chi}).
\end{equation}

The next result appears in \cite[Lemma 2.7]{he1}. Here we provide an alternative proof.

\begin{lemma}\label{lem2.4} Let $j\in\mathbb{N}$, and $b,r\in\mathbb{Z}$ with $b\not=0$. Then, for $x\in\mathbb{R}$,
\begin{equation}\label{eq2.9}
\sideset{}{'}\sum_{d=-\infty}^{+\infty}\frac{e^{2\pi \mathrm{i}dx}}{\bigl(d+\frac{r}{b}\bigl)^{j}}
=\frac{(-1)^{j-1}(2\pi\mathrm{i})^{j}\mathrm{sgn}(b)}{j!b^{1-j}}\sum_{l=1}^{|b|}e^{\frac{2\pi \mathrm{i}(l-x)r}{b}}\overline{B}_{j}\biggl(\frac{l-x}{b}\biggl).
\end{equation}
\end{lemma}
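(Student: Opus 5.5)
The plan is to start from the right-hand side of \eqref{eq2.9}, expand each Bernoulli function $\overline{B}_{j}\bigl(\frac{l-x}{b}\bigr)$ by its Fourier series \eqref{eq2.6}, and then carry out the finite sum over $l$ explicitly. This sum acts as an orthogonality relation and keeps only one residue class of frequencies. Reindexing that residue class should give exactly the left-hand side.

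Concretely, by \eqref{eq2.6},
\begin{equation*}
\sum_{l=1}^{|b|}e^{\frac{2\pi \mathrm{i}(l-x)r}{b}}\overline{B}_{j}\biggl(\frac{l-x}{b}\biggr)
=-\frac{j!}{(2\pi\mathrm{i})^{j}}\sideset{}{'}\sum_{k=-\infty}^{+\infty}\frac{e^{-\frac{2\pi\mathrm{i}x(k+r)}{b}}}{k^{j}}\sum_{l=1}^{|b|}e^{\frac{2\pi\mathrm{i}l(k+r)}{b}}.
\end{equation*}
Here the interchange is harmless because the $l$-sum is finite. The inner sum equals $|b|$ if $k\equiv -r \pmod{b}$ and $0$ otherwise. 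We therefore write $k=-r-bd$ with $d\in\mathbb{Z}$. Then $e^{-2\pi\mathrm{i}x(k+r)/b}=e^{2\pi\mathrm{i}dx}$ and $k^{j}=(-b)^{j}\bigl(d+\frac{r}{b}\bigr)^{j}$. The excluded term $k=0$ corresponds exactly to the excluded term $d=-r/b$ on the left, when that value is an integer.

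Multiplying through by the prefactor $\frac{(-1)^{j-1}(2\pi\mathrm{i})^{j}\mathrm{sgn}(b)}{j!\,b^{1-j}}$, the constants collect as $(-1)^{j-1}\cdot(-1)\cdot(-1)^{-j}\cdot\mathrm{sgn}(b)|b|\,b^{j-1}b^{-j}=1$. This yields \eqref{eq2.9}.

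The only delicate point is convergence when $j=1$. Then \eqref{eq2.6} holds only as a limit of symmetric partial sums $\sum_{0<|k|\le N}$, including at integer arguments, where the $\frac12\delta_{1,n}\delta_{\mathbb{Z}}$ convention in $\overline{B}_{1}$ matches the symmetric limit. The left side of \eqref{eq2.9} is likewise to be read with symmetric summation. After the substitution $k=-r-bd$, the window $|k|\le N$ becomes a window in $d$ that is symmetric up to a bounded shift of size $O(|r/b|)$. I would handle this by noting that the finitely many boundary terms are each $O(1/N)$, so their contribution tends to $0$ and the two symmetric limits agree. For $j\ge2$ all series converge absolutely and no care is needed.
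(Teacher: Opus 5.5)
Your argument is correct, and it takes a different route from the paper's.

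The paper proceeds in three steps:
\begin{enumerate}
\item It reduces to $b>0$ and $0\le r\le b-1$, shifting the index $d$ after writing $r=bq+s$.
\item It identifies the left-hand side of \eqref{eq2.9} with the Fourier series of the Apostol--Bernoulli function $\overline{\beta}_{j}(x,e^{2\pi\mathrm{i}r/b})$ via Lemma \ref{lem2.3}, with the branch-of-logarithm bookkeeping of \eqref{eq2.10}.
\item It rewrites that function as the twisted sum \eqref{eq2.13} by quoting the multiplication formula \eqref{eq2.12} from the author's earlier work.
\end{enumerate}

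You go in the opposite direction. You expand the right-hand side using only \eqref{eq2.6}, and let the finite $l$-sum act as the orthogonality relation for additive characters modulo $|b|$. This is essentially the mechanism hidden inside \eqref{eq2.12}, made explicit.

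The details check out. Your constant bookkeeping is right, and the excluded terms correspond correctly ($k=0\leftrightarrow d=-r/b$). Your treatment of $j=1$ via symmetric partial sums is also sound: the shifted $d$-window differs from a symmetric one by boundedly many terms, each $O(1/N)$.

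What your route buys is self-containment and uniformity:
\begin{itemize}
\item It needs no Apostol--Bernoulli functions, no result imported from another paper, and no branch case split.
\item It handles all $r\in\mathbb{Z}$ and both signs of $b$ at once, whereas the paper leaves $b<0$ as ``similar''.
\item It justifies the conditional convergence for $j=1$ explicitly. The paper's own index shift $d\mapsto d-q$ tacitly needs this too.
\end{itemize}

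What the paper's route buys is coherence with its framework. It presents the lemma as a consequence of the general expansion \eqref{eq2.3} and the multiplication formula, and Raabe's formula \eqref{eq2.14} falls out of \eqref{eq2.13} as a by-product.
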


\begin{proof} We only prove the case $b>0$ (the case $b<0$ can be proved in a similar manner). Write $r=bq+s$, where $q,s\in\mathbb{Z}$ with $0\leq s\leq b-1$. We have
\begin{equation*}
\sideset{}{'}\sum_{d=-\infty}^{+\infty}\frac{e^{2\pi \mathrm{i}dx}}{\bigl(d+\frac{r}{b}\bigl)^{j}}
=e^{-2\pi \mathrm{i}qx}\sideset{}{'}\sum_{d=-\infty}^{+\infty}\frac{e^{2\pi \mathrm{i}dx}}{\bigl(d+\frac{s}{b}\bigl)^{j}}.
\end{equation*}
Thus, it suffices to prove the case $0\leq r\leq b-1$ in \eqref{eq2.9}. It is easy to see that
\begin{equation}\label{eq2.10}
\arg e^{\frac{2\pi \mathrm{i}r}{b}}=\begin{cases}
\frac{2\pi r}{b},  &\text{if $0\leq r\leq b/2$},\\
\frac{2\pi r}{b}-2\pi,  &\text{if $b/2<r\leq b-1$}.
\end{cases}
\end{equation}
Setting $q=1$ and $\lambda=e^{\frac{2\pi \mathrm{i}r}{b}}$ in Lemma \ref{lem2.3}, we know from \eqref{eq2.10} that
\begin{eqnarray}\label{eq2.11}
\overline{\beta}_{j}(x,e^{\frac{2\pi \mathrm{i}r}{b}})&=&-\frac{j!e^{-\frac{2\pi \mathrm{i}rx}{b}}}{(2\pi \mathrm{i})^{j}}\sideset{}{'}\sum_{d=-\infty}^{+\infty}\frac{e^{2\pi\mathrm{i}dx}}{(d-\frac{r}{b})^{j}}\nonumber\\
&=&-\frac{(-1)^{j}j!e^{-\frac{2\pi \mathrm{i}rx}{b}}}{(2\pi \mathrm{i})^{j}}\sideset{}{'}\sum_{d=-\infty}^{+\infty}\frac{e^{-2\pi\mathrm{i}dx}}{(d+\frac{r}{b})^{j}},
\end{eqnarray}
where $\overline{\beta}_{j}(x,\lambda)$ are the Apostol-Bernoulli functions (for $q=1$ in Definition \ref{def2.2}, they satisfy $\overline{\beta}_{j}(x,\lambda)=\overline{\beta}_{j,\chi}(x,\lambda)$). On the other hand, it is shown in \cite[Corollary 3.9]{he2} that, for $a,b,j\in\mathbb{N}$, and for $\lambda,\mu\in\mathbb{C}\setminus\{0\}$ with $\lambda^{a}=\mu^{b}$,
\begin{equation}\label{eq2.12}
a^{j-1}\sum_{l=0}^{a-1}\lambda^{l}\overline{\beta}_{j}\biggl(bx+\frac{bl}{a},\mu\biggl)=b^{j-1}\sum_{l=0}^{b-1}\mu^{l}\overline{\beta}_{j}\biggl(ax+\frac{al}{b},\lambda\biggl).
\end{equation}
Taking $a=\lambda=1$ and $\mu=e^{\frac{2\pi \mathrm{i}r}{b}}$, and replacing $x$ by $x/b$ in \eqref{eq2.12}, we obtain
\begin{eqnarray}\label{eq2.13}
\overline{\beta}_{j}(x,e^{\frac{2\pi \mathrm{i}r}{b}})&=&b^{j-1}\sum_{l=0}^{b-1}e^{\frac{2\pi \mathrm{i}rl}{b}}\overline{B}_{j}\biggl(\frac{x+l}{b}\biggl)\nonumber\\
&=&b^{j-1}\sum_{l=1}^{b}e^{\frac{2\pi \mathrm{i}rl}{b}}\overline{B}_{j}\biggl(\frac{x+l}{b}\biggl).
\end{eqnarray}
Therefore, combining \eqref{eq2.11} and \eqref{eq2.13}, we conclude that the case $0\leq r\leq b-1$ in \eqref{eq2.9} holds. This completes the proof of Lemma \ref{lem2.4}.
\end{proof}

We remark that taking $r=0$ in \eqref{eq2.13} and then replacing $b$ by $a$ gives Raabe's formula \cite{raabe}: for $a,j\in\mathbb{N}$ and $x\in\mathbb{R}$,
\begin{equation}\label{eq2.14}
a^{j-1}\sum_{l=0}^{a-1}\overline{B}_{j}\biggl(\frac{x+l}{a}\biggl)=\overline{B}_{j}(x).
\end{equation}

It follows from Lemma \ref{lem2.4} that the following result holds.

\begin{lemma}\label{lem2.5} Let $j,q\in\mathbb{N}$, and $y,z\in\mathbb{R}$. Let $a,b,\overline{a},\overline{b},k\in\mathbb{Z}$ with $a\neq0$, $b\neq0$, $a\overline{a}+b\overline{b}=(a,b)$, and $(a,b)\mid k$. Let $\chi$ be a primitive Dirichlet character modulo $q$. Then
\begin{eqnarray}\label{eq2.15}
&&\sideset{}{'}\sum_{d=-\infty}^{+\infty}\frac{\chi(\frac{k\overline{a}}{(a,b)}+\frac{bd}{(a,b)})e^{\frac{2\pi\mathrm{i}d(by-aqz)}{(a,b)q}}}{\bigl(d+\frac{k\overline{a}}{b}\bigl)^{j}}\nonumber\\
&&\qquad=\frac{(-1)^{j-1}(2\pi \mathrm{i})^{j}\mathrm{sgn}(b)(a,b)^{1-j}G(1,\chi)}{qj!b^{1-j}}\sum_{r=1}^{q}\overline{\chi}(r)\nonumber\\
&&\qquad\qquad\times
\sum_{l=1}^{|\frac{b}{(a,b)}|}e^{\frac{2\pi \mathrm{i}(l-\frac{by-aqz}{(a,b)q})k\overline{a}}{b}}\overline{B}_{j}\biggl(\frac{(a,b)l-\frac{by-br-aqz}{q}}{b}\biggl),
\end{eqnarray}
and
\begin{eqnarray}\label{eq2.16}
&&\sideset{}{'}\sum_{d=-\infty}^{+\infty}\frac{\chi(\frac{k\overline{a}}{(a,b)}+\frac{bd}{(a,b)})e^{\frac{2\pi\mathrm{i}d(by-aqz)}{(a,b)q}}}{\bigl(d-\frac{k\overline{b}}{a}\bigl)^{j}}\nonumber\\
&&\qquad=\frac{(-1)^{j-1}(2\pi \mathrm{i})^{j}\mathrm{sgn}(a)(a,b)^{1-j}G(1,\chi)}{qj!a^{1-j}}\sum_{r=1}^{q}\overline{\chi}(r)e^{-\frac{2\pi\mathrm{i}kr}{aq}}\nonumber\\
&&\qquad\qquad\times
\sum_{l=1}^{|\frac{a}{(a,b)}|}e^{-\frac{2\pi \mathrm{i}(l-\frac{by-aqz}{(a,b)q})k\overline{b}}{a}}\overline{B}_{j}\biggl(\frac{(a,b)l-\frac{by-br-aqz}{q}}{a}\biggl).
\end{eqnarray}
\end{lemma}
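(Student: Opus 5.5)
The plan is to deduce both identities of Lemma \ref{lem2.5} from Lemma \ref{lem2.4} by expanding the character as a finite Fourier sum via the Gauss sum relation \eqref{eq2.4}. This turns the character-twisted Dirichlet series into a finite combination of untwisted ones. Write $g=(a,b)$, $a'=a/g$, $b'=b/g$, $k'=k/g$, and $w=\frac{by-aqz}{gq}$. By \eqref{eq2.4} applied to $\overline{\chi}$ (which is also primitive), for every integer $N$ we have $\chi(N)=\frac{G(1,\chi)}{q}\sum_{r=1}^{q}\overline{\chi}(r)e^{-2\pi\mathrm{i}rN/q}$. Here I use $G(1,\chi)G(1,\overline{\chi})=\chi(-1)q$ and $\overline{\chi}(-r)$ after a reindexing $r\mapsto -r$; if a sign or conjugation issue appears, I will instead expand $\chi(N)=\frac{1}{G(1,\overline{\chi})}\sum_r\overline{\chi}(r)e^{2\pi\mathrm{i}rN/q}$ and then convert the prefactor.

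For \eqref{eq2.15}, take $N=k'\overline{a}+b'd$. Then
$$\chi(N)e^{2\pi\mathrm{i}dw}=\frac{G(1,\chi)}{q}\sum_{r}\overline{\chi}(r)e^{2\pi\mathrm{i}rk'\overline{a}/q}\,e^{2\pi\mathrm{i}d(w+ b'r/q)},$$
with the sign of $r$ fixed by the choice above. The inner sum over $d$ has the form of the left side of \eqref{eq2.9}, with denominator $(d+k\overline{a}/b)^j=(d+k'\overline{a}/b')^j$. So I apply Lemma \ref{lem2.4} with $b'$ in place of $b$, $k'\overline{a}$ in place of $r$, and $x=w+b'r/q$, or $w-b'r/q$ depending on the chosen sign.

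This produces the factor $\frac{(-1)^{j-1}(2\pi\mathrm{i})^j\mathrm{sgn}(b)}{j!\,b'^{1-j}}$. Since $b'^{j-1}=b^{j-1}g^{1-j}$, this gives exactly the factor $(a,b)^{1-j}/b^{1-j}$ in the statement. The Bernoulli argument becomes $\frac{l-w\mp b'r/q}{b'}=\frac{gl-\frac{by-aqz}{q}\mp br/q}{b}$, which matches the target with the plus sign on $br$. The exponential $e^{2\pi\mathrm{i}(l-x)k'\overline{a}/b'}$ splits into $e^{2\pi\mathrm{i}(l-w)k\overline{a}/b}$ times $e^{\mp2\pi\mathrm{i}rk'\overline{a}/q}$. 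The latter cancels the phase $e^{\pm2\pi\mathrm{i}rk'\overline{a}/q}$ coming from the character expansion, which explains why no $r$-phase appears in \eqref{eq2.15}.

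For \eqref{eq2.16}, I shift the denominator using $a\overline{a}+b\overline{b}=g$. This gives $d-\frac{k\overline{b}}{a}=d+\frac{k\overline{a}}{b}\cdot\frac{b}{a}\cdots$, but it is more convenient to substitute directly. Writing $N=k'\overline{a}+b'd$, the expansion gives the phase $e^{2\pi\mathrm{i}d(w\pm b'r/q)}$, and the denominator is $(d-k'\overline{b}/a')^j$. I apply Lemma \ref{lem2.4} with modulus $a'$, shift $-k'\overline{b}$ and the same $x$. The Bernoulli argument becomes $\frac{l-x}{a'}=\frac{gl-\frac{by-aqz}{q}\mp br/q}{a}$, which matches. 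The leftover phases are $e^{\pm2\pi\mathrm{i}rk'\overline{a}/q}\cdot e^{\pm2\pi\mathrm{i}(b'r/q)k'\overline{b}/a'}$ (with the signs as forced). Using $\overline{a}a'+\overline{b}b'=1$, we get $k'\overline{a}+k'\overline{b}b'/a'=k'/a'$, so these combine to $e^{-2\pi\mathrm{i}kr/(aq)}$, as claimed.

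The main obstacle is bookkeeping of signs: the sign convention in the Gauss-sum expansion (conjugation and $\chi(-1)$), the sign of $r$, and making sure every phase combination collapses correctly. I will fix the expansion sign by requiring the $br$ term in the Bernoulli argument to appear with a plus sign, and then verify the $r$-phases by the Bezout computation above.
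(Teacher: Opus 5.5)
Your proposal is correct and is essentially the paper's proof: expand $\chi$ by its finite Fourier series $\chi(N)=\frac{G(1,\chi)}{q}\sum_{r=1}^{q}\overline{\chi}(r)e^{-2\pi\mathrm{i}rN/q}$ (the paper's \eqref{eq2.17}) and interchange it with the sum over $d$. Then apply Lemma \ref{lem2.4} with modulus $b/(a,b)$ (resp.\ $a/(a,b)$) and $x=\frac{by-br-aqz}{(a,b)q}$; the relation $a'\overline{a}+b'\overline{b}=1$ collapses the leftover $r$-phases in \eqref{eq2.16} to $e^{-2\pi\mathrm{i}kr/(aq)}$. With the expansion you stated, the phases in your first display should read $e^{-2\pi\mathrm{i}rk'\overline{a}/q}\,e^{2\pi\mathrm{i}d(w-b'r/q)}$ rather than with plus signs; this is exactly the sign choice that gives $+br/q$ in the Bernoulli argument and makes the cancellations you describe go through.
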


\begin{proof}
It is well known that the finite Fourier expansion of a primitive Dirichlet character $\chi$ modulo $q$ ($q\in\mathbb{N}$) has the form (see, e.g., \cite[Theorem 8.20]{apostol}):
\begin{equation}\label{eq2.17}
\chi(m)=\frac{G(1,\chi)}{q}\sum_{r=1}^{q}\overline{\chi}(r)e^{-\frac{2\pi\mathrm{i}mr}{q}}\quad(m\in\mathbb{N}).
\end{equation}
From Lemma \ref{lem2.4} and \eqref{eq2.17}, it follows that
\begin{eqnarray*}
&&\sideset{}{'}\sum_{d=-\infty}^{+\infty}\frac{\chi(\frac{k\overline{a}}{(a,b)}+\frac{bd}{(a,b)})e^{\frac{2\pi\mathrm{i}d(by-aqz)}{(a,b)q}}}{\bigl(d+\frac{k\overline{a}}{b}\bigl)^{j}}\nonumber\\
&&\qquad=\frac{G(1,\chi)}{q}\sum_{r=1}^{q}\overline{\chi}(r)e^{-\frac{2\pi\mathrm{i}k\overline{a}r}{(a,b)q}}
\sideset{}{'}\sum_{d=-\infty}^{+\infty}\frac{e^{\frac{2\pi\mathrm{i}d(by-br-aqz)}{(a,b)q}}}{\bigl(d+\frac{k\overline{a}}{b}\bigl)^{j}}\nonumber\\
&&\qquad=\frac{(-1)^{j-1}(2\pi \mathrm{i})^{j}\mathrm{sgn}(b)(a,b)^{1-j}G(1,\chi)}{qj!b^{1-j}}\sum_{r=1}^{q}\overline{\chi}(r)\nonumber\\
&&\qquad\qquad\times
\sum_{l=1}^{|\frac{b}{(a,b)}|}e^{\frac{2\pi \mathrm{i}(l-\frac{by-aqz}{(a,b)q})k\overline{a}}{b}}\overline{B}_{j}\biggl(\frac{(a,b)l-\frac{by-br-aqz}{q}}{b}\biggl),
\end{eqnarray*}
and
\begin{eqnarray*}
&&\sideset{}{'}\sum_{d=-\infty}^{+\infty}\frac{\chi(\frac{k\overline{a}}{(a,b)}+\frac{bd}{(a,b)})e^{\frac{2\pi\mathrm{i}d(by-aqz)}{(a,b)q}}}{\bigl(d-\frac{k\overline{b}}{a}\bigl)^{j}}\nonumber\\
&&\qquad=\frac{G(1,\chi)}{q}\sum_{r=1}^{q}\overline{\chi}(r)e^{-\frac{2\pi\mathrm{i}k\overline{a}r}{(a,b)q}}
\sideset{}{'}\sum_{d=-\infty}^{+\infty}\frac{e^{\frac{2\pi\mathrm{i}d(by-br-aqz)}{(a,b)q}}}{\bigl(d-\frac{k\overline{b}}{a}\bigl)^{j}}\nonumber\\
&&\qquad=\frac{(-1)^{j-1}(2\pi \mathrm{i})^{j}\mathrm{sgn}(a)(a,b)^{1-j}G(1,\chi)}{qj!a^{1-j}}\sum_{r=1}^{q}\overline{\chi}(r)e^{-\frac{2\pi\mathrm{i}kr}{aq}}\nonumber\\
&&\qquad\qquad\times
\sum_{l=1}^{|\frac{a}{(a,b)}|}e^{-\frac{2\pi \mathrm{i}(l-\frac{by-aqz}{(a,b)q})k\overline{b}}{a}}\overline{B}_{j}\biggl(\frac{(a,b)l-\frac{by-br-aqz}{q}}{a}\biggl),
\end{eqnarray*}
as desired.
\end{proof}

\begin{lemma}\label{lem2.6} Let $m,n\in\mathbb{N}$, and $d,x,y\in\mathbb{R}$ be distinct. Then
\begin{eqnarray}
&&\frac{1}{(d-x)^{m}(d-y)^{n}}\nonumber\\
&&\qquad=\sum_{j=1}^{m}\binom{m+n-j-1}{n-1}\frac{(-1)^{m-j}}{(x-y)^{m+n-j}(d-x)^{j}}\nonumber\\
&&\qquad\quad+\sum_{j=1}^{n}\binom{m+n-j-1}{m-1}\frac{(-1)^{n-j}}{(y-x)^{m+n-j}(d-y)^{j}}.
\end{eqnarray}
\end{lemma}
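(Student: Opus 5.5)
This is the standard partial fraction decomposition. The plan is to prove it first as an identity of rational functions in $d$, with $x\neq y$ fixed, and then specialize to real $d$ distinct from $x,y$.

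The function $f(d)=(d-x)^{-m}(d-y)^{-n}$ is a proper rational function in $d$. Its only poles are at $d=x$ (of order $m$) and at $d=y$ (of order $n$). By uniqueness of partial fractions,
\[
f(d)=\sum_{j=1}^{m}\frac{A_{j}}{(d-x)^{j}}+\sum_{j=1}^{n}\frac{C_{j}}{(d-y)^{j}},
\]
so it remains to identify the coefficients $A_{j}$ and $C_{j}$.

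\emph{Coefficients at $x$.} Put $u=d-x$. Then
\[
(d-y)^{-n}=(u+(x-y))^{-n}=(x-y)^{-n}\Bigl(1+\frac{u}{x-y}\Bigr)^{-n}.
\]
Expanding with the negative binomial series gives
\[
(d-y)^{-n}=\sum_{k\ge0}\binom{n+k-1}{n-1}(-1)^{k}\frac{u^{k}}{(x-y)^{n+k}},
\]
valid for $|u|<|x-y|$. Multiplying by $u^{-m}$, the principal part at $u=0$ consists of the terms with $k=m-j$ for $1\le j\le m$. This gives
\[
A_{j}=\binom{m+n-j-1}{n-1}\frac{(-1)^{m-j}}{(x-y)^{m+n-j}},
\]
which is exactly the first sum in the lemma.

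\emph{Coefficients at $y$.} The situation is symmetric under $x\leftrightarrow y$ and $m\leftrightarrow n$. Applying the same computation yields the second sum.

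\emph{Conclusion.} Subtract both principal parts from $f$. The difference is a rational function with no poles that vanishes as $d\to\infty$, so it is identically zero, by Liouville's theorem or simply by degree considerations. This establishes the identity for all complex $d\neq x,y$, and in particular for real $d$.

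There is no real obstacle. The only points needing care are the sign $(-1)^{m-j}$ and the binomial index $\binom{m+n-j-1}{n-1}$ in the coefficient extraction, both of which come out correctly from the substitution $k=m-j$. As an alternative, one could argue by induction on $m+n$ using
\[
\frac{1}{(d-x)(d-y)}=\frac{1}{x-y}\Bigl(\frac{1}{d-x}-\frac{1}{d-y}\Bigr),
\]
but the Laurent-expansion route is more direct.
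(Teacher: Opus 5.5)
Your proof is correct and complete. Expanding $(d-y)^{-n}$ in powers of $u=d-x$ gives exactly the coefficients $\binom{m+n-j-1}{n-1}(-1)^{m-j}(x-y)^{-(m+n-j)}$ of $(d-x)^{-j}$. The left-hand side is invariant under the simultaneous swap $x\leftrightarrow y$, $m\leftrightarrow n$, which produces the second sum. Finally, $f$ minus the two principal parts is a polynomial tending to $0$, hence zero.

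The paper does not prove this lemma itself; it only refers to \cite[Lemma 2.2]{he1}. There is therefore no argument in the text to compare yours against, and your coefficient extraction simply makes the paper self-contained at this point.

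Compared with the other natural routes, your method avoids a recombination step. Those routes are induction on $m+n$ using $\frac{1}{(d-x)(d-y)}=\frac{1}{x-y}\bigl(\frac{1}{d-x}-\frac{1}{d-y}\bigr)$, or differentiating that identity $m-1$ times in $x$ and $n-1$ times in $y$ with the Leibniz rule. In your approach the binomial coefficients and signs come out of a single series expansion. It also works verbatim for distinct complex $d,x,y$, which is more than the application in \eqref{eq3.7} needs. There $d$ runs over $\mathbb{Z}$ with $x=-k\overline{a}/b$ and $y=k\overline{b}/a$, and $x\neq y$ holds precisely because $k\neq 0$.
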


\begin{proof}
See \cite[Lemma 2.2]{he1} for details.
\end{proof}

\begin{lemma}\label{lem2.7} Let $m,n\in\mathbb{N}$. Then
\begin{equation}
\sum_{j=1}^{m}\binom{m+n-j-1}{n-1}=\binom{m+n-1}{n}.
\end{equation}
\end{lemma}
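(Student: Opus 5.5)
The identity $\sum_{j=1}^{m}\binom{m+n-j-1}{n-1}=\binom{m+n-1}{n}$ is a hockey-stick identity, and I would prove it by reindexing and telescoping with Pascal's rule.

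First, substitute $k=m-j$. As $j$ runs from $1$ to $m$, $k$ runs from $m-1$ down to $0$, so the left-hand side becomes
\begin{equation*}
\sum_{k=0}^{m-1}\binom{n-1+k}{n-1}.
\end{equation*}

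Next, use Pascal's rule in the form
\begin{equation*}
\binom{n-1+k}{n-1}=\binom{n+k}{n}-\binom{n-1+k}{n}\quad(k\geq0).
\end{equation*}
For $k=0$ this holds because $\binom{n-1}{n}=0$, which uses $n\in\mathbb{N}$.

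Summing over $0\leq k\leq m-1$, the right-hand side telescopes to $\binom{n+m-1}{n}-\binom{n-1}{n}=\binom{m+n-1}{n}$, which is the claimed value.

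As an alternative check, one can induct on $m$. For $m=1$ both sides equal $1$. Passing from $m$ to $m+1$ adds the term $\binom{m+n-1}{n-1}$ (from $j=1$ after shifting indices), and Pascal's rule gives $\binom{m+n-1}{n}+\binom{m+n-1}{n-1}=\binom{m+n}{n}$.

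The only point needing care is the boundary convention $\binom{n-1}{n}=0$ for $n\geq1$; everything else is routine.
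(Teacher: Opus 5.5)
Your proof is correct and complete. After the reindexing $k=m-j$, the Pascal-rule telescoping gives $\binom{m+n-1}{n}-\binom{n-1}{n}=\binom{m+n-1}{n}$, and your induction on $m$ is an equally valid check. The paper gives no argument for this lemma and only refers to \cite[Lemma 2.8]{he1}, so your self-contained hockey-stick proof supplies exactly what it defers to that reference.
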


\begin{proof}
See \cite[Lemma 2.8]{he1} for details.
\end{proof}

\section{Proof of Theorems}\label{sec3}

\noindent{\em{The proof of Theorem \ref{thm1.1}.}} Let
\begin{equation*}
f(x)=\overline{B}_{m}(ax+y,\overline{\chi})\overline{B}_{n}\biggl(\frac{bx}{q}+z\biggl).
\end{equation*}
Since $f(x)$ is of bounded variation on every finite interval, by Dirichlet-Jordan test, $f(x)$ may be expanded in a Fourier series:
\begin{equation}\label{eq3.1}
\frac{f(x^{+})+f(x^{-})}{2}=\sum_{k=-\infty}^{+\infty}c_{k}(f)e^{\frac{2\pi\mathrm{i}kx}{q}},
\end{equation}
where the Fourier coefficients $c_{k}(f)$ are determined by
\begin{equation}\label{eq3.2}
c_{k}(f)=\frac{1}{q}\int_{0}^{q}\overline{B}_{m}(ax+y,\overline{\chi})\overline{B}_{n}\biggl(\frac{bx}{q}+z\biggl)e^{-\frac{2\pi\mathrm{i}kx}{q}}dx.
\end{equation}
Using Parseval's formula from \cite[Proposition 3.1.10]{cohen} or \cite[p. 81]{stein}, together with \eqref{eq2.5} and \eqref{eq2.6}, we may rewrite \eqref{eq3.2} as
\begin{equation}\label{eq3.3}
c_{k}(f)=\frac{q^{m-1}m!n!G(1,\overline{\chi})}{(2\pi \mathrm{i})^{m+n}}\underset{al+bj=k}{\sideset{}{'}\sum_{l=-\infty}^{+\infty}\sideset{}{'}\sum_{j=-\infty}^{+\infty}}\frac{\chi(l)e^{2\pi\mathrm{i}(\frac{ly}{q}+jz)}} {l^{m}j^{n}}.
\end{equation}
Since the Diophantine equation $ax+by=k$ admits solutions precisely when $(a,b)\mid k$, any integer solution may be written as
\begin{equation*}
x=\frac{k\overline{a}}{(a,b)}+\frac{bd}{(a,b)},\quad y=\frac{k\overline{b}}{(a,b)}-\frac{ad}{(a,b)},
\end{equation*}
where $\overline{a},\overline{b},d\in\mathbb{Z}$ satisfy $a\overline{a}+b\overline{b}=(a,b)$. It then follows from \eqref{eq3.3} that, if $(a,b)\nmid k$, then
\begin{equation}\label{eq3.4}
c_{k}(f)=0;
\end{equation}
and if $(a,b)\mid k$, then
\begin{eqnarray}\label{eq3.5}
c_{k}(f)&=&\frac{q^{m-1}m!n!(a,b)^{m+n}G(1,\overline{\chi})e^{\frac{2\pi\mathrm{i}k(\overline{a}y+\overline{b}qz)}{(a,b)q}}}{(2\pi \mathrm{i})^{m+n}}\nonumber\\
&&\quad\times\sideset{}{'}\sum_{d=-\infty}^{+\infty}\frac{\chi(\frac{k\overline{a}}{(a,b)}+\frac{bd}{(a,b)})e^{\frac{2\pi\mathrm{i}d(by-aqz)}{(a,b)q}}}{(k\overline{a}+bd)^{m}(k\overline{b}-ad)^{n}}\nonumber\\
&=&\frac{(-1)^{n}q^{m-1}m!n!(a,b)^{m+n}G(1,\overline{\chi})e^{\frac{2\pi\mathrm{i}k(\overline{a}y+\overline{b}qz)}{(a,b)q}}}{a^{n}b^{m}(2\pi \mathrm{i})^{m+n}}\nonumber\\
&&\quad\times\sideset{}{'}\sum_{d=-\infty}^{+\infty}\frac{\chi(\frac{k\overline{a}}{(a,b)}+\frac{bd}{(a,b)})e^{\frac{2\pi\mathrm{i}d(by-aqz)}{(a,b)q}}}{\bigl(d+\frac{k\overline{a}}{b}\bigl)^{m}\bigl(d-\frac{k\overline{b}}{a}\bigl)^{n}}.
\end{eqnarray}
In particular, we see from \eqref{eq2.5} and \eqref{eq3.5} that
\begin{equation}\label{eq3.6}
c_{0}(f)=\frac{(-1)^{n-1}m!n!(a,b)^{m+n}\chi(\frac{b}{(a,b)})\overline{B}_{m+n}\bigl(\frac{by-aqz}{(a,b)},\overline{\chi}\bigl)}{(aq)^{n}b^{m}(m+n)!}.
\end{equation}
We next consider the case $k\not=0$ in \eqref{eq3.5}. Note that there is an implicit restriction
\begin{equation*}
\biggl(\frac{a}{(a,b)},\overline{b}\biggl)=\biggl(\frac{b}{(a,b)},\overline{a}\biggl)=1.
\end{equation*}
Hence, by taking $x=-k\overline{a}/b$ and $y=k\overline{b}/a$ in Lemma \ref{lem2.6}, we obtain from \eqref{eq3.5} that if $(a,b)\mid k$, then
\begin{eqnarray}\label{eq3.7}
c_{k}(f)&=&\frac{(-1)^{n}q^{m-1}m!n!(a,b)^{m+n}G(1,\overline{\chi})e^{\frac{2\pi\mathrm{i}k(\overline{a}y+\overline{b}qz)}{(a,b)q}}}{a^{n}b^{m}(2\pi \mathrm{i})^{m+n}}\nonumber\\
&&\quad\times\biggl(\sum_{j=1}^{m}\binom{m+n-j-1}{n-1}(-1)^{m-j}\biggl(-\frac{ab}{k(a,b)}\biggl)^{m+n-j}\nonumber\\
&&\qquad\quad\times\biggl(\sideset{}{'}\sum_{d=-\infty}^{+\infty}\frac{\chi(\frac{k\overline{a}}{(a,b)}+\frac{bd}{(a,b)})e^{\frac{2\pi\mathrm{i}d(by-aqz)}{(a,b)q}}}{\bigl(d+\frac{k\overline{a}}{b}\bigl)^{j}}\nonumber\\
&&\qquad\qquad-\epsilon(a,k)\chi\biggl(\frac{k}{a}\biggl)e^{\frac{2\pi\mathrm{i}k\overline{b}(by-aqz)}{a(a,b)q}}\biggl(\frac{ab}{k(a,b)}\biggl)^{j}\biggl)\nonumber\\
&&\qquad+\sum_{j=1}^{n}\binom{m+n-j-1}{m-1}(-1)^{n-j}\biggl(\frac{ab}{k(a,b)}\biggl)^{m+n-j}\nonumber\\
&&\qquad\quad\times\biggl(\sideset{}{'}\sum_{d=-\infty}^{+\infty}\frac{\chi(\frac{k\overline{a}}{(a,b)}+\frac{bd}{(a,b)})e^{\frac{2\pi\mathrm{i}d(by-aqz)}{(a,b)q}}}{\bigl(d-\frac{k\overline{b}}{a}\bigl)^{j}}\nonumber\\
&&\qquad\qquad-\epsilon(b,k)\chi(0)e^{-\frac{2\pi\mathrm{i}k\overline{a}(by-aqz)}{b(a,b)q}}\biggl(-\frac{ab}{k(a,b)}\biggl)^{j}\biggl)\biggl),
\end{eqnarray}
where $\epsilon(a,k)=1$ if $a\mid k$ and $0$ otherwise. From \eqref{eq2.4} and the geometric sum in \cite[Theorem 8.1]{apostol}, we see that
\begin{eqnarray}\label{eq3.8}
G(1,\chi)G(1,\overline{\chi})&=&G(1,\chi)\sum_{r=1}^{q}\overline{\chi}(r)e^{\frac{2\pi \mathrm{i}r}{q}}\nonumber\\
&=&\sum_{r=1}^{q}G(r,\chi)e^{\frac{2\pi \mathrm{i}r}{q}}\nonumber\\
&=&\sum_{j=1}^{q}\chi(j)\sum_{r=1}^{q}e^{\frac{2\pi \mathrm{i}(j+1)r}{q}}\nonumber\\
&=&\chi(-1)q.
\end{eqnarray}
Using Lemma \ref{lem2.5} and \eqref{eq3.8}, we find that \eqref{eq3.7} can be rewritten as
\begin{eqnarray*}
c_{k}(f)&=&-q^{m-1}b^{n-1}m!n!(a,b)\chi(-1)\mathrm{sgn}(b)\sum_{j=1}^{m}\binom{m+n-j-1}{n-1}\frac{(-1)^{j}a^{m-j}}{j!(2\pi\mathrm{i}k)^{m+n-j}}\nonumber\\
&&\quad\times\sum_{r=1}^{q}\overline{\chi}(r)\sum_{l=1}^{|\frac{b}{(a,b)}|}e^{\frac{2\pi\mathrm{i}k(z+\overline{a}l)}{b}}\overline{B}_{j}\biggl(\frac{(a,b)l-\frac{by-br-aqz}{q}}{b}\biggl)\nonumber\\
&&-\epsilon(a,k)\frac{q^{m-1}a^{m}b^{n}m!n!\chi(\frac{k}{a})G(1,\overline{\chi})e^{\frac{2\pi\mathrm{i}ky}{aq}}}{(2\pi\mathrm{i}k)^{m+n}}\sum_{j=1}^{m}\binom{m+n-j-1}{n-1}\nonumber\\
&&-(aq)^{m-1}m!n!(a,b)\chi(-1)\mathrm{sgn}(a)\sum_{j=1}^{n}\binom{m+n-j-1}{m-1}\frac{b^{n-j}}{j!(2\pi\mathrm{i}k)^{m+n-j}}\nonumber\\
&&\quad\times\sum_{r=1}^{q}\overline{\chi}(r)e^{-\frac{2\pi\mathrm{i}kr}{aq}}\sum_{l=1}^{|\frac{a}{(a,b)}|}e^{\frac{2\pi\mathrm{i}k(\frac{y}{q}-\overline{b}l)}{a}}\overline{B}_{j}\biggl(\frac{(a,b)l-\frac{by-br-aqz}{q}}{a}\biggl)\nonumber\\
&&-\epsilon(b,k)\frac{q^{m-1}a^{m}b^{n}m!n!\chi(0)G(1,\overline{\chi})e^{\frac{2\pi\mathrm{i}kz}{b}}}{(2\pi\mathrm{i}k)^{m+n}}\sum_{j=1}^{n}\binom{m+n-j-1}{m-1}.
\end{eqnarray*}
This, together with Lemma \ref{lem2.7}, yields that if $(a,b)\mid k$, then
\begin{eqnarray}\label{eq3.9}
c_{k}(f)&=&-nq^{m-1}b^{n-1}(a,b)\chi(-1)\mathrm{sgn}(b)\sum_{j=1}^{m}\binom{m}{j}\frac{(-1)^{j}a^{m-j}(m+n-j-1)!}{(2\pi\mathrm{i}k)^{m+n-j}}\nonumber\\
&&\quad\times\sum_{r=1}^{q}\overline{\chi}(r)\sum_{l=1}^{|\frac{b}{(a,b)}|}e^{\frac{2\pi\mathrm{i}k(z+\overline{a}l)}{b}}\overline{B}_{j}\biggl(\frac{(a,b)l-\frac{by-br-aqz}{q}}{b}\biggl)\nonumber\\
&&-m(aq)^{m-1}(a,b)\chi(-1)\mathrm{sgn}(a)\sum_{j=1}^{n}\binom{n}{j}\frac{b^{n-j}(m+n-j-1)!}{(2\pi\mathrm{i}k)^{m+n-j}}\nonumber\\
&&\quad\times\sum_{r=1}^{q}\overline{\chi}(r)e^{-\frac{2\pi\mathrm{i}kr}{aq}}\sum_{l=1}^{|\frac{a}{(a,b)}|}e^{\frac{2\pi\mathrm{i}k(\frac{y}{q}-\overline{b}l)}{a}}\overline{B}_{j}\biggl(\frac{(a,b)l-\frac{by-br-aqz}{q}}{a}\biggl)\nonumber\\
&&-\epsilon(a,k)e^{\frac{2\pi\mathrm{i}ky}{aq}}\frac{m(m+n-1)!q^{m-1}a^{m}b^{n}\chi(\frac{k}{a})G(1,\overline{\chi})}{(2\pi\mathrm{i}k)^{m+n}}\nonumber\\
&&-\epsilon(b,k)e^{\frac{2\pi\mathrm{i}kz}{b}}\frac{n(m+n-1)!q^{m-1}a^{m}b^{n}\chi(0)G(1,\overline{\chi})}{(2\pi\mathrm{i}k)^{m+n}}.
\end{eqnarray}
Since $\bigl(\frac{a}{(a,b)},\frac{b}{(a,b)}\bigl)=1$, $\frac{al}{(a,b)}$ runs over a complete residue
system modulo $|\frac{b}{(a,b)}|$ as $l$ does, and $\frac{-bl}{(a,b)}$ runs over a complete residue system modulo $|\frac{a}{(a,b)}|$
as $l$ does. It follows from \eqref{eq2.7} and \eqref{eq3.9} that, if $(a,b)\mid k$, then
\begin{eqnarray}\label{eq3.10}
c_{k}(f)&=&-nq^{m-1}b^{n-1}(a,b)\chi(-1)\mathrm{sgn}(b)\sum_{j=1}^{m}\binom{m}{j}\frac{(-1)^{j}a^{m-j}(m+n-j-1)!}{(2\pi\mathrm{i}k)^{m+n-j}}\nonumber\\
&&\quad\times\sum_{r=1}^{q}\overline{\chi}(r)\sum_{l=1}^{|\frac{b}{(a,b)}|}e^{\frac{2\pi\mathrm{i}k(l+z)}{b}}\overline{B}_{j}\biggl(\frac{a(l+z)}{b}-\frac{y-r}{q}\biggl)\nonumber\\
&&-m(aq)^{m-1}(a,b)\chi(-1)\mathrm{sgn}(a)\sum_{j=1}^{n}\binom{n}{j}\frac{(-1)^{j}b^{n-j}(m+n-j-1)!}{(2\pi\mathrm{i}k)^{m+n-j}}\nonumber\\
&&\quad\times\sum_{r=1}^{q}\overline{\chi}(r)e^{-\frac{2\pi\mathrm{i}kr}{aq}}\sum_{l=1}^{|\frac{a}{(a,b)}|}e^{\frac{2\pi\mathrm{i}k(l+\frac{y}{q})}{a}}\overline{B}_{j}\biggl(\frac{bl+\frac{by-br-aqz}{q}}{a}\biggl)\nonumber\\
&&-\epsilon(a,k)e^{\frac{2\pi\mathrm{i}ky}{aq}}\frac{m(m+n-1)!q^{m-1}a^{m}b^{n}\chi(\frac{k}{a})G(1,\overline{\chi})}{(2\pi\mathrm{i}k)^{m+n}}\nonumber\\
&&-\epsilon(b,k)e^{\frac{2\pi\mathrm{i}kz}{b}}\frac{n(m+n-1)!q^{m-1}a^{m}b^{n}\chi(0)G(1,\overline{\chi})}{(2\pi\mathrm{i}k)^{m+n}}.
\end{eqnarray}
Inserting \eqref{eq3.4}, \eqref{eq3.6} and \eqref{eq3.10} into \eqref{eq3.1}, we arrive at
\begin{eqnarray}\label{eq3.11}
&&\overline{B}_{m}(ax+y,\overline{\chi})\overline{B}_{n}\biggl(\frac{bx}{q}+z\biggl)+\delta_{1,m}\delta_{1,n}\mathrm{sgn}(ab)\frac{\delta_{\mathbb{Z}}(ax+y)\delta_{\mathbb{Z}}(\frac{bx}{q}+z)\overline{\chi}(-ax-y)}{4}\nonumber\\
&&\qquad=\sideset{}{'}\sum_{\substack{k=-\infty\\(a,b)\mid k}}^{+\infty}\biggl(-nq^{m-1}b^{n-1}(a,b)\chi(-1)\mathrm{sgn}(b)\sum_{j=1}^{m}\binom{m}{j}\frac{(-1)^{j}a^{m-j}}{(2\pi\mathrm{i}k)^{m+n-j}}\nonumber\\
&&\qquad\qquad\times(m+n-j-1)!\sum_{r=1}^{q}\overline{\chi}(r)\sum_{l=1}^{|\frac{b}{(a,b)}|}e^{\frac{2\pi\mathrm{i}k(l+z)}{b}}\overline{B}_{j}\biggl(\frac{a(l+z)}{b}-\frac{y-r}{q}\biggl)\nonumber\\
&&\qquad\quad-m(aq)^{m-1}(a,b)\chi(-1)\mathrm{sgn}(a)\sum_{j=1}^{n}\binom{n}{j}\frac{(-1)^{j}b^{n-j}(m+n-j-1)!}{(2\pi\mathrm{i}k)^{m+n-j}}\nonumber\\
&&\qquad\qquad\times\sum_{r=1}^{q}\overline{\chi}(r)e^{-\frac{2\pi\mathrm{i}kr}{aq}}\sum_{l=1}^{|\frac{a}{(a,b)}|}e^{\frac{2\pi\mathrm{i}k(l+\frac{y}{q})}{a}}
\overline{B}_{j}\biggl(\frac{bl+\frac{by-br}{q}}{a}-z\biggl)\biggl)e^{\frac{2\pi\mathrm{i}kx}{q}}\nonumber\\
&&\qquad\quad-\sideset{}{'}\sum_{\substack{k=-\infty\\a\mid k}}^{+\infty}\biggl(\frac{m(m+n-1)!q^{m-1}a^{m}b^{n}\chi(\frac{k}{a})G(1,\overline{\chi})e^{\frac{2\pi\mathrm{i}ky}{aq}}}{(2\pi\mathrm{i}k)^{m+n}}\biggl)e^{\frac{2\pi\mathrm{i}kx}{q}}\nonumber\\
&&\qquad\quad-\sideset{}{'}\sum_{\substack{k=-\infty\\b\mid k}}^{+\infty}\biggl(\frac{n(m+n-1)!q^{m-1}a^{m}b^{n}\chi(0)G(1,\overline{\chi})e^{\frac{2\pi\mathrm{i}kz}{b}}}{(2\pi\mathrm{i}k)^{m+n}}\biggl)e^{\frac{2\pi\mathrm{i}kx}{q}}\nonumber\\
&&\qquad\quad+\frac{(-1)^{n-1}m!n!(a,b)^{m+n}\chi(\frac{b}{(a,b)})\overline{B}_{m+n}\bigl(\frac{by-aqz}{(a,b)},\overline{\chi}\bigl)}{(aq)^{n}b^{m}(m+n)!}.
\end{eqnarray}
Thus, from \eqref{eq2.5}, \eqref{eq2.6} and \eqref{eq3.11}, we obtain
\begin{eqnarray}\label{eq3.12}
&&\overline{B}_{m}(ax+y,\overline{\chi})\overline{B}_{n}\biggl(\frac{bx}{q}+z\biggl)+\delta_{1,m}\delta_{1,n}\mathrm{sgn}(ab)\frac{\delta_{\mathbb{Z}}(ax+y)\delta_{\mathbb{Z}}(\frac{bx}{q}+z)\overline{\chi}(-ax-y)}{4}\nonumber\\
&&\qquad=nq^{m-1}b^{n-1}\chi(-1)\mathrm{sgn}(b)\sum_{j=1}^{m}\binom{m}{j}\frac{(-1)^{j}a^{m-j}}{(a,b)^{m+n-j-1}(m+n-j)}\nonumber\\
&&\qquad\qquad\times\sum_{r=1}^{q}\overline{\chi}(r)\sum_{l=1}^{|\frac{b}{(a,b)}|}\overline{B}_{j}\biggl(\frac{a(l+z)}{b}-\frac{y-r}{q}\biggl)\nonumber\\
&&\qquad\qquad\times\overline{B}_{m+n-j}\biggl(\frac{(a,b)x}{q}+\frac{(a,b)(l+z)}{b}\biggl)\nonumber\\
&&\qquad\quad+m(aq)^{m-1}\chi(-1)\mathrm{sgn}(a)\sum_{j=1}^{n}\binom{n}{j}\frac{(-1)^{j}b^{n-j}}{(a,b)^{m+n-j-1}(m+n-j)}\nonumber\\
&&\qquad\qquad\times\sum_{r=1}^{q}\overline{\chi}(r)\sum_{l=1}^{|\frac{a}{(a,b)}|}\overline{B}_{j}\biggl(\frac{bl+\frac{by-br}{q}}{a}-z\biggl)\nonumber\\
&&\qquad\qquad\times\overline{B}_{m+n-j}\biggl(\frac{(a,b)x}{q}+\frac{(a,b)(l+\frac{y}{q})}{a}-\frac{(a,b)r}{aq}\biggl)\nonumber\\
&&\qquad\quad+\frac{mb^{n}\overline{B}_{m+n}(ax+y,\overline{\chi})}{(aq)^{n}(m+n)}+\frac{nq^{m-1}a^{m}\chi(0)G(1,\overline{\chi})\overline{B}_{m+n}(\frac{bx}{q}+z)}{b^{m}(m+n)}\nonumber\\
&&\qquad\quad+\frac{(-1)^{n-1}m!n!(a,b)^{m+n}\chi(\frac{b}{(a,b)})\overline{B}_{m+n}\bigl(\frac{by-aqz}{(a,b)},\overline{\chi}\bigl)}{(aq)^{n}b^{m}(m+n)!}.
\end{eqnarray}
It can be shown from \eqref{eq2.14}, the property of residue systems, and the division algorithm that
\begin{eqnarray}\label{eq3.13}
&&\frac{1}{(a,b)^{m+n-j-1}}\sum_{l=1}^{|\frac{b}{(a,b)}|}\overline{B}_{j}\biggl(\frac{a(l+z)}{b}-\frac{y-r}{q}\biggl)\nonumber\\
&&\qquad\times\overline{B}_{m+n-j}\biggl(\frac{(a,b)x}{q}+\frac{(a,b)(l+z)}{b}\biggl)\nonumber\\
&&\qquad=\sum_{k=0}^{(a,b)-1}\sum_{l=1}^{|\frac{b}{(a,b)}|}\overline{B}_{j}\biggl(\frac{a\bigl(l+z+k|\frac{b}{(a,b)}|\bigl)}{b}-\frac{y-r}{q}\biggl)\nonumber\\
&&\qquad\qquad\times\overline{B}_{m+n-j}\biggl(\frac{x}{q}+\frac{l+z+k|\frac{b}{(a,b)}|}{b}\biggl)\nonumber\\
&&\qquad=\sum_{l=1}^{|b|}\overline{B}_{j}\biggl(\frac{a(l+z)}{b}-\frac{y-r}{q}\biggl)\overline{B}_{m+n-j}\biggl(\frac{x}{q}+\frac{l+z}{b}\biggl),
\end{eqnarray}
and
\begin{eqnarray}\label{eq3.14}
&&\frac{1}{(a,b)^{m+n-j-1}}\sum_{l=1}^{|\frac{a}{(a,b)}|}\overline{B}_{j}\biggl(\frac{bl+\frac{by-br}{q}}{a}-z\biggl)\nonumber\\
&&\qquad\times\overline{B}_{m+n-j}\biggl(\frac{(a,b)x}{q}+\frac{(a,b)(l+\frac{y}{q})}{a}-\frac{(a,b)r}{aq}\biggl)\nonumber\\
&&\qquad=\sum_{k=0}^{(a,b)-1}\sum_{l=1}^{|\frac{a}{(a,b)}|}\overline{B}_{j}\biggl(\frac{b(l+k|\frac{a}{(a,b)}|)+\frac{by-br}{q}}{a}-z\biggl)\nonumber\\
&&\qquad\qquad\times\overline{B}_{m+n-j}\biggl(\frac{x}{q}+\frac{l+k|\frac{a}{(a,b)}|+\frac{y}{q}}{a}-\frac{r}{aq}\biggl)\nonumber\\
&&\qquad=\sum_{l=1}^{|a|}\overline{B}_{j}\biggl(\frac{bl}{a}+\frac{by-br}{aq}-z\biggl)\overline{B}_{m+n-j}\biggl(\frac{x}{q}+\frac{l}{a}+\frac{y-r}{aq}\biggl).
\end{eqnarray}
Therefore, applying \eqref{eq3.13} and \eqref{eq3.14} to the right-hand side of \eqref{eq3.12}, we obtain \eqref{eq1.11}, which completes the proof of Theorem \ref{thm1.1}.

\noindent{\em{The proof of Theorem \ref{thm1.2}.}} It is easily seen from \eqref{eq2.7}, \eqref{eq2.8} and the property of residue systems that, for $j\in\mathbb{N}$,
\begin{eqnarray}\label{eq3.15}
&&\chi_{2}(-1)\sum_{r=1}^{q}\overline{\chi_{2}}(r)\overline{B}_{j}\biggl(\frac{a(l+z)}{b}-\frac{y-r}{q}\biggl)\nonumber\\
&&\qquad=(-1)^{j}\sum_{r=1}^{q}\overline{\chi_{2}}(-r)\overline{B}_{j}\biggl(\frac{-a(l+z)}{b}+\frac{y-r}{q}\biggl)\nonumber\\
&&\qquad=(-1)^{j}\sum_{r=1}^{q}\overline{\chi_{2}}(r)\overline{B}_{j}\biggl(\frac{-a(l+z)}{b}+\frac{y+r}{q}\biggl)\nonumber\\
&&\qquad=\frac{(-1)^{j}}{q^{j-1}}\overline{B}_{j}\biggl(\frac{-aq(l+z)}{b}+y,\overline{\chi_{2}}\biggl).
\end{eqnarray}
Similarly, it follows from \eqref{eq2.8}, \eqref{eq2.14} and the property of residue systems that
\begin{eqnarray}\label{eq3.16}
&&\chi_{2}(-1)\sum_{r=1}^{q}\overline{\chi_{2}}(r)\sum_{l=1}^{a}\overline{B}_{m+n}\biggl(\frac{k+x}{cq}+\frac{l}{a}+\frac{y-r}{aq}\biggl)\nonumber\\
&&\qquad=\frac{1}{a^{m+n-1}}\sum_{r=1}^{q}\overline{\chi_{2}}(-r)\overline{B}_{m+n}\biggl(\frac{a(k+x)}{cq}+\frac{y-r}{q}\biggl)\nonumber\\
&&\qquad=\frac{1}{a^{m+n-1}}\sum_{r=1}^{q}\overline{\chi_{2}}(r)\overline{B}_{m+n}\biggl(\frac{a(k+x)}{cq}+\frac{y+r}{q}\biggl)\nonumber\\
&&\qquad=\frac{1}{(aq)^{m+n-1}}\overline{B}_{m+n}\biggl(\frac{a(k+x)}{c}+y,\overline{\chi_{2}}\biggl).
\end{eqnarray}
Hence, setting $\chi=\chi_{2}$, replacing $x$ by $(k+x)/c$, and summing both sides of \eqref{eq1.11} over $k=1,\ldots,cq$ with weight $\chi_{1}(k)$, we obtain from \eqref{eq3.15}, \eqref{eq3.16} and the orthogonality relations for characters in \cite[Theorem 6.10]{apostol} that
\begin{eqnarray}\label{eq3.17}
&&\sum_{k=1}^{cq}\chi_{1}(k)\overline{B}_{m}\biggl(\frac{a(k+x)}{c}+y,\overline{\chi_{2}}\biggl)\overline{B}_{n}\biggl(\frac{b(k+x)}{cq}+z\biggl)\nonumber\\
&&\qquad=nq^{m-1}b^{n-1}\sum_{j=1}^{m}\binom{m}{j}\frac{a^{m-j}}{q^{j-1}(m+n-j)}\sum_{l=1}^{b}\overline{B}_{j}\biggl(\frac{-aq(l+z)}{b}+y,\overline{\chi_{2}}\biggl)\nonumber\\
&&\qquad\qquad\times\sum_{k=1}^{cq}\chi_{1}(k)\overline{B}_{m+n-j}\biggl(\frac{k+x}{cq}+\frac{l+z}{b}\biggl)\nonumber\\
&&\qquad\quad+m(aq)^{m-1}\chi_{2}(-1)\sum_{j=0}^{n}\binom{n}{j}\frac{(-1)^{j}b^{n-j}}{m+n-j}\sum_{k=1}^{cq}\chi_{1}(k)\sum_{r=1}^{q}\overline{\chi_{2}}(r)\nonumber\\
&&\qquad\qquad\times\sum_{l=1}^{a}\overline{B}_{j}\biggl(\frac{bl}{a}+\frac{by-br}{aq}-z\biggl)\overline{B}_{m+n-j}\biggl(\frac{k+x}{cq}+\frac{l}{a}+\frac{y-r}{aq}\biggl)\nonumber\\
&&\qquad\quad-\frac{\delta_{1,m}\delta_{1,n}\chi_{2}(-1)}{4}\sum_{k=1}^{cq}\chi_{1}(k)\delta_{\mathbb{Z}}\biggl(\frac{a(k+x)}{c}+y\biggl)\delta_{\mathbb{Z}}\biggl(\frac{b(k+x)}{cq}+z\biggl)\nonumber\\
&&\qquad\qquad\times\chi_{2}\biggl(\frac{a(k+x)}{c}+y\biggl).
\end{eqnarray}
Note that from \eqref{eq2.8}, \eqref{eq2.14} and the division algorithm we have
\begin{eqnarray}\label{eq3.18}
&&\sum_{k=1}^{cq}\chi_{1}(k)\overline{B}_{m+n-j}\biggl(\frac{k+x}{cq}+\frac{l+z}{b}\biggl)\nonumber\\
&&\qquad=\sum_{l=0}^{c-1}\sum_{s=1}^{q}\chi_{1}(ql+s)\overline{B}_{m+n-j}\biggl(\frac{ql+s+x}{cq}+\frac{l+z}{b}\biggl)\nonumber\\
&&\qquad=\frac{1}{c^{m+n-j-1}}\sum_{s=1}^{q}\chi_{1}(s)\overline{B}_{m+n-j}\biggl(\frac{s+x}{q}+\frac{c(l+z)}{b}\biggl)\nonumber\\
&&\qquad=\frac{1}{(cq)^{m+n-j-1}}\overline{B}_{m+n-j}\biggl(\frac{cq(l+z)}{b}+x,\chi_{1}\biggl),
\end{eqnarray}
and
\begin{eqnarray*}
&&\sum_{k=1}^{cq}\chi_{1}(k)\sum_{r=1}^{q}\overline{\chi_{2}}(r)\sum_{l=1}^{a}\overline{B}_{j}\biggl(\frac{bl}{a}+\frac{by-br}{aq}-z\biggl)\overline{B}_{m+n-j}\biggl(\frac{k+x}{cq}+\frac{l}{a}+\frac{y-r}{aq}\biggl)\nonumber\\
&&\qquad=\sum_{t=0}^{c-1}\sum_{s=1}^{q}\chi_{1}(qt+s)\sum_{r=1}^{q}\overline{\chi_{2}}(r)\sum_{l=1}^{a}\overline{B}_{j}\biggl(\frac{bl}{a}+\frac{by-br}{aq}-z\biggl)\nonumber\\
&&\qquad\qquad\times\overline{B}_{m+n-j}\biggl(\frac{t}{c}+\frac{s+x}{cq}+\frac{l}{a}+\frac{y-r}{aq}\biggl)\nonumber\\
&&\qquad=\frac{1}{c^{m+n-j-1}}\sum_{s=1}^{q}\chi_{1}(s)\sum_{r=1}^{q}\overline{\chi_{2}}(r)\sum_{l=1}^{a}\overline{B}_{j}\biggl(\frac{bl}{a}+\frac{by-br}{aq}-z\biggl)\nonumber\\
&&\qquad\qquad\times\overline{B}_{m+n-j}\biggl(\frac{s+x}{q}+\frac{cl}{a}+\frac{c(y-r)}{aq}\biggl)\nonumber\\
&&\qquad=\frac{\chi_{2}(-1)}{(cq)^{m+n-j-1}}\sum_{r=1}^{q}\sum_{l=1}^{a}\overline{\chi_{2}}(ql-r)\overline{B}_{j}\biggl(\frac{bl}{a}+\frac{by-br}{aq}-z\biggl)\nonumber\\
&&\qquad\qquad\times\overline{B}_{m+n-j}\biggl(x+\frac{cql}{a}+\frac{c(y-r)}{a},\chi_{1}\biggl)\nonumber\\
&&\qquad=\frac{\chi_{2}(-1)}{(cq)^{m+n-j-1}}\sum_{k=1}^{aq}\overline{\chi_{2}}(k)\overline{B}_{j}\biggl(\frac{b(k+y)}{aq}-z\biggl)\overline{B}_{m+n-j}\biggl(\frac{c(k+y)}{a}+x,\chi_{1}\biggl).
\end{eqnarray*}
Thus, inserting the above two identities into \eqref{eq3.17} and then replacing $x$ by $z$, $y$ by $x$, and $z$ by $y$, we obtain \eqref{eq1.12} and complete the proof of Theorem \ref{thm1.2}.

\noindent{\em{The proof of Theorem \ref{thm1.3}.}} Replacing $x$ by $(k+x)/c$, and summing both sides of \eqref{eq1.11} over $k=1,\ldots,cq$, we see from \eqref{eq2.14}, \eqref{eq3.15} and \eqref{eq3.16} that
\begin{eqnarray}\label{eq3.19}
&&\sum_{k=1}^{cq}\overline{B}_{m}\biggl(\frac{a(k+x)}{c}+y,\overline{\chi}\biggl)\overline{B}_{n}\biggl(\frac{b(k+x)}{cq}+z\biggl)\nonumber\\
&&\qquad=n\biggl(\frac{b}{cq}\biggl)^{n-1}\sum_{j=1}^{m}\binom{m}{j}\frac{(\frac{a}{c})^{m-j}}{m+n-j}\sum_{l=1}^{b}\overline{B}_{j}\biggl(\frac{-aq(l+z)}{b}+y,\overline{\chi}\biggl)\nonumber\\
&&\qquad\qquad\times\overline{B}_{m+n-j}\biggl(\frac{cq(l+z)}{b}+x\biggl)\nonumber\\
&&\qquad\quad+m\biggl(\frac{a}{c}\biggl)^{m-1}\chi(-1)\sum_{j=0}^{n}\binom{n}{j}\frac{(-1)^{j}(\frac{b}{cq})^{n-j}}{m+n-j}\sum_{r=1}^{q}\overline{\chi}(r)\nonumber\\
&&\qquad\qquad\times\sum_{l=1}^{a}\overline{B}_{j}\biggl(\frac{b(ql-r+y)}{aq}-z\biggl)\overline{B}_{m+n-j}\biggl(\frac{c(ql-r+y)}{a}+x\biggl)\nonumber\\
&&\qquad\quad+\frac{(-1)^{n-1}m!n!(a,b)^{m+n}c\chi(\frac{b}{(a,b)})\overline{B}_{m+n}\bigl(\frac{by-aqz}{(a,b)},\overline{\chi}\bigl)}{a^{n}q^{n-1}b^{m}(m+n)!}\nonumber\\
&&\qquad\quad-\frac{\delta_{1,m}\delta_{1,n}\chi(-1)}{4}\sum_{k=1}^{cq}\delta_{\mathbb{Z}}\biggl(\frac{a(k+x)}{c}+y\biggl)\delta_{\mathbb{Z}}\biggl(\frac{b(k+x)}{cq}+z\biggl)\nonumber\\
&&\qquad\qquad\times\overline{\chi}\biggl(\frac{a(k+x)}{c}+y\biggl).
\end{eqnarray}
Observe that
\begin{eqnarray}\label{eq3.20}
&&\chi(-1)\sum_{r=1}^{q}\overline{\chi}(r)\sum_{l=1}^{a}\overline{B}_{j}\biggl(\frac{b(ql-r+y)}{aq}-z\biggl)\overline{B}_{m+n-j}\biggl(\frac{c(ql-r+y)}{a}+x\biggl)\nonumber\\
&&\qquad=\sum_{l=1}^{aq}\overline{\chi}(l)\overline{B}_{j}\biggl(\frac{b(l+y)}{aq}-z\biggl)\overline{B}_{m+n-j}\biggl(\frac{c(l+y)}{a}+x\biggl).
\end{eqnarray}
Now, inserting \eqref{eq3.20} into \eqref{eq3.19} and then replacing $x$ by $z$, $y$ by $x$, $z$ by $y$, and $\overline{\chi}$ by $\chi$, we obtain \eqref{eq1.16} and finish the proof of Theorem \ref{thm1.3}.

\noindent{\em{The proof of Theorem \ref{thm1.4}.}} Replacing $q$ by $q_{2}$ in \eqref{eq3.15}, we obtain that, for $j\in\mathbb{N}$,
\begin{eqnarray}\label{eq3.21}
&&\chi_{2}(-1)\sum_{r=1}^{q_{2}}\overline{\chi_{2}}(r)\overline{B}_{j}\biggl(\frac{a(l+z)}{b}-\frac{y-r}{q_{2}}\biggl)\nonumber\\
&&\qquad=\frac{(-1)^{j}}{q_{2}^{j-1}}\overline{B}_{j}\biggl(\frac{-aq_{2}(l+z)}{b}+y,\overline{\chi_{2}}\biggl).
\end{eqnarray}
Similarly, replacing $q$ by $q_{2}$, and $c$ by $cq_{1}$ in \eqref{eq3.16}, we obtain
\begin{eqnarray}\label{eq3.22}
&&\chi_{2}(-1)\sum_{r=1}^{q_{2}}\overline{\chi_{2}}(r)\sum_{l=1}^{a}\overline{B}_{m+n}\biggl(\frac{k+x}{cq_{1}q_{2}}+\frac{l}{a}+\frac{y-r}{aq_{2}}\biggl)\nonumber\\
&&\qquad=\frac{1}{(aq_{2})^{m+n-1}}\overline{B}_{m+n}\biggl(\frac{a(k+x)}{cq_{1}}+y,\overline{\chi_{2}}\biggl).
\end{eqnarray}
Hence, setting $\chi=\chi_{2}$, replacing $x$ by $(k+x)/cq_{1}$, and summing both sides of \eqref{eq1.11} over $k=1,\ldots,cq_{1}q_{2}$ with weight $\chi_{1}(k)$, we find from \eqref{eq3.21}, \eqref{eq3.22} and the orthogonality relations for characters that
\begin{eqnarray}\label{eq3.23}
&&\sum_{k=1}^{cq_{1}q_{2}}\chi_{1}(k)\overline{B}_{m}\biggl(\frac{a(k+x)}{cq_{1}}+y,\overline{\chi_{2}}\biggl)\overline{B}_{n}\biggl(\frac{b(k+x)}{cq_{1}q_{2}}+z\biggl)\nonumber\\
&&\qquad=nq_{2}^{m-1}b^{n-1}\sum_{j=1}^{m}\binom{m}{j}\frac{a^{m-j}}{q_{2}^{j-1}(m+n-j)}\sum_{l=1}^{b}\overline{B}_{j}\biggl(\frac{-aq_{2}(l+z)}{b}+y,\overline{\chi_{2}}\biggl)\nonumber\\
&&\qquad\qquad\times\sum_{k=1}^{cq_{1}q_{2}}\chi_{1}(k)\overline{B}_{m+n-j}\biggl(\frac{k+x}{cq_{1}q_{2}}+\frac{l+z}{b}\biggl)\nonumber\\
&&\qquad\quad+m(aq_{2})^{m-1}\chi_{2}(-1)\sum_{j=0}^{n}\binom{n}{j}\frac{(-1)^{j}b^{n-j}}{m+n-j}\sum_{k=1}^{cq_{1}q_{2}}\chi_{1}(k)\sum_{r=1}^{q_{2}}\overline{\chi_{2}}(r)\nonumber\\
&&\qquad\qquad\times\sum_{l=1}^{a}\overline{B}_{j}\biggl(\frac{bl}{a}+\frac{by-br}{aq_{2}}-z\biggl)\overline{B}_{m+n-j}\biggl(\frac{k+x}{cq_{1}q_{2}}+\frac{l}{a}+\frac{y-r}{aq_{2}}\biggl)\nonumber\\
&&\qquad\quad-\frac{\delta_{1,m}\delta_{1,n}\chi_{2}(-1)}{4}\sum_{k=1}^{cq_{1}q_{2}}\chi_{1}(k)\delta_{\mathbb{Z}}\biggl(\frac{a(k+x)}{cq_{1}}+y\biggl)
\delta_{\mathbb{Z}}\biggl(\frac{b(k+x)}{cq_{1}q_{2}}+z\biggl)\nonumber\\
&&\qquad\qquad\times\overline{\chi_{2}}\biggl(\frac{a(k+x)}{cq_{1}}+y\biggl).
\end{eqnarray}
Note that replacing $q$ by $q_{1}$ and $c$ by $cq_{2}$ in \eqref{eq3.18} yields
\begin{eqnarray}\label{eq3.24}
&&\sum_{k=1}^{cq_{1}q_{2}}\chi_{1}(k)\overline{B}_{m+n-j}\biggl(\frac{k+x}{cq_{1}q_{2}}+\frac{l+z}{b}\biggl)\nonumber\\
&&\qquad=\frac{1}{(cq_{1}q_{2})^{m+n-j-1}}\overline{B}_{m+n-j}\biggl(\frac{cq_{1}q_{2}(l+z)}{b}+x,\chi_{1}\biggl);
\end{eqnarray}
and using \eqref{eq2.8}, \eqref{eq2.14} and the division algorithm, we obtain
\begin{eqnarray}\label{eq3.25}
&&\sum_{k=1}^{cq_{1}q_{2}}\chi_{1}(k)\sum_{r=1}^{q_{2}}\overline{\chi_{2}}(r)\sum_{l=1}^{a}\overline{B}_{j}\biggl(\frac{bl}{a}+\frac{by-br}{aq_{2}}-z\biggl)\nonumber\\
&&\qquad\times\overline{B}_{m+n-j}\biggl(\frac{k+x}{cq_{1}q_{2}}+\frac{l}{a}+\frac{y-r}{aq_{2}}\biggl)\nonumber\\
&&\qquad=\sum_{t=0}^{cq_{2}-1}\sum_{s=1}^{q_{1}}\chi_{1}(q_{1}t+s)\sum_{r=1}^{q_{2}}\overline{\chi_{2}}(r)\sum_{l=1}^{a}\overline{B}_{j}\biggl(\frac{bl}{a}+\frac{by-br}{aq_{2}}-z\biggl)\nonumber\\
&&\qquad\qquad\times\overline{B}_{m+n-j}\biggl(\frac{t}{cq_{2}}+\frac{s+x}{cq_{1}q_{2}}+\frac{l}{a}+\frac{y-r}{aq_{2}}\biggl)\nonumber\\
&&\qquad=\frac{1}{(cq_{2})^{m+n-j-1}}\sum_{s=1}^{q_{1}}\chi_{1}(s)\sum_{r=1}^{q_{2}}\overline{\chi_{2}}(r)\sum_{l=1}^{a}\overline{B}_{j}\biggl(\frac{bl}{a}+\frac{by-br}{aq_{2}}-z\biggl)\nonumber\\
&&\qquad\qquad\times\overline{B}_{m+n-j}\biggl(\frac{s+x}{q_{1}}+\frac{cq_{2}l}{a}+\frac{c(y-r)}{a}\biggl)\nonumber\\
&&\qquad=\frac{\chi_{2}(-1)}{(cq_{1}q_{2})^{m+n-j-1}}\sum_{r=1}^{q_{2}}\sum_{l=1}^{a}\overline{\chi_{2}}(q_{2}l-r)\overline{B}_{j}\biggl(\frac{bl}{a}+\frac{by-br}{aq_{2}}-z\biggl)\nonumber\\
&&\qquad\qquad\times\overline{B}_{m+n-j}\biggl(x+\frac{cq_{1}q_{2}l}{a}+\frac{cq_{1}(y-r)}{a},\chi_{1}\biggl)\nonumber\\
&&\qquad=\frac{\chi_{2}(-1)}{(cq_{1}q_{2})^{m+n-j-1}}\sum_{k=1}^{aq_{2}}\overline{\chi_{2}}(k)\overline{B}_{j}\biggl(\frac{b(k+y)}{aq_{2}}-z\biggl)\nonumber\\
&&\qquad\qquad\times\overline{B}_{m+n-j}\biggl(\frac{cq_{1}(k+y)}{a}+x,\chi_{1}\biggl).
\end{eqnarray}
Therefor, inserting \eqref{eq3.24} and \eqref{eq3.25} into \eqref{eq3.23} and then replacing $a$ by $aq_{1}$, $x$ by $z$, $y$ by $x$, and $z$ by $y$, we obtain \eqref{eq1.19}. This concludes the proof of Theorem \ref{thm1.4}.

\section{Some special cases}\label{sec4}

We begin by discussing a special case of Theorem \ref{thm1.2}. We define the sums $s_{n}(a,b;\chi_{1},\overline{\chi_{2}};x,y)$ as follows:
\begin{equation}\label{eq4.1}
s_{n}(a,b;\chi_{1},\overline{\chi_{2}};x,y)=\sum_{r=1}^{bq}\chi_{1}(r)\overline{B}_{n}\biggl(\frac{a(r+y)}{b}+x,\overline{\chi_{2}}\biggl)\overline{B}_{1}\biggl(\frac{r+y}{bq}\biggl),
\end{equation}
where $n,q\in\mathbb{N}$, $a,b\in\mathbb{Z}$ with $b\geq1$, $\chi_{1}$ and $\chi_{2}$ are Dirichlet characters modulo $q$, and $x,y\in\mathbb{R}$.
The reciprocity formula for the sums \eqref{eq4.1} is given in the following corollary.

\begin{corollary}\label{cor4.1} Let $a,b,n,q\in\mathbb{N}$ with $q\geq 2$. Let $\chi_{1}$ and $\chi_{2}$ be non-principal primitive Dirichlet characters modulo $q$. Then, for $x,y\in\mathbb{R}$,
\begin{eqnarray}\label{eq4.2}
&&ab^{n}s_{n}(a,b;\chi_{1},\overline{\chi_{2}};x,y)+ba^{n}s_{n}(b,a;\overline{\chi_{2}},\chi_{1};y,x)\nonumber\\
&&\qquad=\frac{1}{n+1}\sum_{j=1}^{n}\binom{n+1}{j}a^{j}b^{n+1-j}\overline{B}_{j}(y,\chi_{1})\overline{B}_{n+1-j}(x,\overline{\chi_{2}})\nonumber\\
&&\qquad\quad+\frac{nq^{n-1}(a,b)^{n+1}}{n+1}\sum_{k=1}^{q}\sum_{j=1}^{q}\chi_{1}(k)\overline{\chi_{2}}(j)\overline{B}_{n+1}\biggl(\frac{a(k+y)+b(j+x)}{q(a,b)}\biggl)\nonumber\\
&&\qquad\quad-\frac{\delta_{1,n}\chi_{2}(-1)ab}{4}\sum_{k=1}^{bq}\chi_{1}(k)\delta_{\mathbb{Z}}\biggl(\frac{a(k+y)}{b}+x\biggl)\delta_{\mathbb{Z}}\biggl(\frac{k+y}{bq}\biggl)\nonumber\\
&&\qquad\qquad\times\overline{\chi_{2}}\biggl(\frac{a(k+y)}{b}+x\biggl).
\end{eqnarray}
\end{corollary}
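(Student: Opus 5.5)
Specialize Theorem \ref{thm1.2} to $n=1$, with the roles relabelled as follows. Replace $(m,n,a,b,c)$ by $(n,1,a,1,b)$ and $(x,y,z)$ by $(x,0,y)$. Then the left side of \eqref{eq1.12} is exactly $s_{n,1}(a,1,b;\chi_1,\overline{\chi_2};x,0,y)=s_{n}(a,b;\chi_1,\overline{\chi_2};x,y)$ as defined in \eqref{eq4.1}.

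The right side of \eqref{eq1.12} then has three parts.

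\emph{The first sum.} It runs over $j=1,\dots,n$ and involves $s^{*}_{j,n+1-j}(-a,b,1;\overline{\chi_2},\chi_1;x,y,0)$. Since the outer index $c$ of $s^*$ is now $1$, this $s^*$ collapses to the single term $r=1$:
\[
\overline{B}_j(-aq+x,\overline{\chi_2})\,\overline{B}_{n+1-j}(bq+y,\chi_1)=\overline{B}_j(x,\overline{\chi_2})\,\overline{B}_{n+1-j}(y,\chi_1),
\]
using $q$-periodicity of the generalized Bernoulli functions. The prefactor is $(1/(bq))^{0}=1$ times $\binom{n}{j}(a/b)^{n-j}/(n+1-j)$.

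\emph{The second sum.} It runs over $j=0,1$ and involves $s_{n+1-j,j}(b,1,a;\overline{\chi_2},\chi_1;y,0,x)$.
\begin{itemize}
\item The $j=1$ term is $-\,n(a/b)^{n-1}\frac{1}{n}\,s_{n,1}(b,1,a;\overline{\chi_2},\chi_1;y,0,x)=-(a/b)^{n-1}s_n(b,a;\overline{\chi_2},\chi_1;y,x)$. Multiplying through by $ab^n$ gives $-ba^{n}s_n(b,a;\ldots)$, which is moved to the left side. This produces the reciprocity shape.
\item The $j=0$ term is $n(a/b)^{n-1}\frac{1}{n+1}(bq)^{-1}\sum_{k=1}^{aq}\overline{\chi_2}(k)\overline{B}_{n+1}\bigl(\frac{b(k+x)}{a}+y,\chi_1\bigr)$, since $\overline{B}_0=1$.
\end{itemize}

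\emph{The boundary term.} The $\delta$-term of \eqref{eq1.12}, multiplied by $ab$ (because $ab^n\cdot$ with $n=1$), matches the last line of \eqref{eq4.2} directly.

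It remains to put the algebra in final form.

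\emph{First sum after scaling.} Multiplying by $ab^n$ and using $\binom{n}{j}/(n+1-j)=\binom{n+1}{j}/(n+1)$, the first sum becomes
\[
\frac{1}{n+1}\sum_{j}\binom{n+1}{j}a^{\,n+1-j}b^{\,j}\,\overline{B}_j(x,\overline{\chi_2})\,\overline{B}_{n+1-j}(y,\chi_1).
\]
Reindexing $j\to n+1-j$ gives the stated sum over $j=1,\dots,n$ with $a^jb^{n+1-j}\overline{B}_j(y,\chi_1)\overline{B}_{n+1-j}(x,\overline{\chi_2})$. The $j=n+1$ endpoint would need $\overline{B}_0(x,\overline{\chi_2})$, which vanishes for non-principal $\chi_2$. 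So the range in \eqref{eq4.2} is consistent, and this check confirms that reindexing is legitimate.

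\emph{$j=0$ term after scaling.} Multiplying by $ab^n$ gives $\frac{n a^{n}}{(n+1)q}\sum_{k=1}^{aq}\overline{\chi_2}(k)\overline{B}_{n+1}\bigl(\frac{b(k+x)}{a}+y,\chi_1\bigr)$. Expand $\overline{B}_{n+1}(\cdot,\chi_1)$ via its definition, i.e.\ \eqref{eq2.8} with $\chi_1$:
\[
q^{n}\sum_{s=1}^q\chi_1(s)\,\overline{B}_{n+1}\Bigl(\frac{b(k+x)+a(s+y)}{aq}\Bigr).
\]
Now write $k=j+qt$ with $t=0,\dots,a-1$; then $\overline{\chi_2}(k)=\overline{\chi_2}(j)$. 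The inner sum over $t$ is $\sum_{t}\overline{B}_{n+1}\bigl(\frac{bt}{a}+\frac{b(j+x)+a(s+y)}{aq}\bigr)$. Put $d=(a,b)$, $a'=a/d$, $b'=b/d$. As $t$ runs mod $a$, $b't$ covers each residue mod $a'$ exactly $d$ times. Raabe's formula \eqref{eq2.14} with modulus $a'$ then gives
\[
d\,a'^{-n}\,\overline{B}_{n+1}\Bigl(\frac{a(k+y)+b(j+x)}{q(a,b)}\Bigr)
\]
after renaming $s\to k$. Collecting powers, $a^{n}q^{n}q^{-1}\cdot d\cdot d^{n}a^{-n}=q^{n-1}(a,b)^{n+1}$, which is the coefficient in the second line of \eqref{eq4.2}.

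The main obstacle is this last step: the Raabe/residue-class reduction when $(a,b)>1$, and keeping the powers of $a$, $q$ and $(a,b)$ straight. The remaining work is bookkeeping of signs and the binomial reindexing.
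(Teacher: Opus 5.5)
Your proposal is correct and follows the paper's proof essentially step for step: specialize Theorem \ref{thm1.2} via $(m,n,b,c,y,z)\to(n,1,1,b,0,y)$, collapse the $s^{*}$ terms by $q$-periodicity, move the $j=1$ term to the left, and reduce the $j=0$ term using \eqref{eq2.8} together with the Raabe/residue-class identity \eqref{eq4.4} before multiplying through by $ab^{n}$. Your remark about a $j=n+1$ endpoint is unnecessary, since $j\mapsto n+1-j$ maps $\{1,\dots,n\}$ onto itself, but it does no harm.
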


\begin{proof}
Taking $n=b=1$ and $y=0$ in Theorem \ref{thm1.2} and then replacing $m$ by $n$, $c$ by $b$, and $z$ by $y$, we obtain
\begin{eqnarray}\label{eq4.3}
&&s_{n}(a,b;\chi_{1},\overline{\chi_{2}};x,y)+\biggl(\frac{a}{b}\biggl)^{n-1}s_{n}(b,a;\overline{\chi_{2}},\chi_{1};y,x)\nonumber\\
&&\qquad=\frac{1}{n+1}\sum_{j=1}^{n}\binom{n+1}{j}\biggl(\frac{a}{b}\biggl)^{n-j}\overline{B}_{j}(x,\overline{\chi_{2}})\overline{B}_{n+1-j}(y,\chi_{1})\nonumber\\
&&\qquad\quad+\frac{na^{n-1}}{b^{n}q(n+1)}\sum_{r=1}^{aq}\overline{\chi_{2}}(r)\overline{B}_{n+1}\biggl(\frac{b(r+x)}{a}+y,\chi_{1}\biggl)\nonumber\\
&&\qquad\quad-\frac{\delta_{1,n}\chi_{2}(-1)}{4}\sum_{k=1}^{bq}\chi_{1}(k)\delta_{\mathbb{Z}}\biggl(\frac{a(k+y)}{b}+x\biggl)\delta_{\mathbb{Z}}\biggl(\frac{k+y}{bq}\biggl)\nonumber\\
&&\qquad\qquad\times\overline{\chi_{2}}\biggl(\frac{a(k+y)}{b}+x\biggl).
\end{eqnarray}
From \eqref{eq2.14}, the property of residue systems, and the division algorithm, it is easily seen that, for $a,j\in\mathbb{N}$, $b\in\mathbb{Z}$ with $b\neq0$, and $x\in\mathbb{R}$,
\begin{eqnarray}\label{eq4.4}
\sum_{l=0}^{a-1}\overline{B}_{j}\biggl(\frac{x+bl}{a}\biggl)
&=&\sum_{r=0}^{(a,b)-1}\sum_{j=0}^{\frac{a}{(a,b)}-1}\overline{B}_{j}\biggl(\frac{\frac{x}{(a,b)}+\frac{b}{(a,b)}(\frac{a}{(a,b)}r+j)}{\frac{a}{(a,b)}}\biggl)\nonumber\\
&=&(a,b)\sum_{j=0}^{\frac{a}{(a,b)}-1}\overline{B}_{j}\biggl(\frac{\frac{x}{(a,b)}+\frac{bj}{(a,b)}}{\frac{a}{(a,b)}}\biggl)\nonumber\\
&=&(a,b)\sum_{j=0}^{\frac{a}{(a,b)}-1}\overline{B}_{j}\biggl(\frac{\frac{x}{(a,b)}+j}{\frac{a}{(a,b)}}\biggl)\nonumber\\
&=&(a,b)^{j}a^{1-j}\overline{B}_{j}\biggl(\frac{x}{(a,b)}\biggl).
\end{eqnarray}
It follows from \eqref{eq2.8} and \eqref{eq4.4} that
\begin{eqnarray*}
&&\sum_{r=1}^{aq}\overline{\chi_{2}}(r)\overline{B}_{n+1}\biggl(\frac{b(r+x)}{a}+y,\chi_{1}\biggl)\nonumber\\
&&\qquad=q^{n}\sum_{r=1}^{aq}\overline{\chi_{2}}(r)\sum_{k=1}^{q}\chi_{1}(k)\overline{B}_{n+1}\biggl(\frac{b(r+x)}{aq}+\frac{k+y}{q}\biggl)\nonumber\\
&&\qquad=q^{n}\sum_{l=0}^{a-1}\sum_{j=1}^{q}\overline{\chi_{2}}(ql+j)\sum_{k=1}^{q}\chi_{1}(k)\overline{B}_{n+1}\biggl(\frac{b(j+x)}{aq}+\frac{bl}{a}+\frac{k+y}{q}\biggl)\nonumber\\
&&\qquad=\frac{q^{n}(a,b)^{n+1}}{a^{n}}\sum_{j=1}^{q}\overline{\chi_{2}}(j)\sum_{k=1}^{q}\chi_{1}(k)\overline{B}_{n+1}\biggl(\frac{b(j+x)+a(k+y)}{q(a,b)}\biggl).
\end{eqnarray*}
This, together with \eqref{eq4.3}, yields
\begin{eqnarray}\label{eq4.5}
&&s_{n}(a,b;\chi_{1},\overline{\chi_{2}};x,y)+\biggl(\frac{a}{b}\biggl)^{n-1}s_{n}(b,a;\overline{\chi_{2}},\chi_{1};y,x)\nonumber\\
&&\qquad=\frac{1}{n+1}\sum_{j=1}^{n}\binom{n+1}{j}\biggl(\frac{a}{b}\biggl)^{n-j}\overline{B}_{j}(x,\overline{\chi_{2}})\overline{B}_{n+1-j}(y,\chi_{1})\nonumber\\
&&\qquad\quad+\frac{nq^{n-1}(a,b)^{n+1}}{ab^{n}(n+1)}\sum_{k=1}^{q}\sum_{j=1}^{q}\chi_{1}(k)\overline{\chi_{2}}(j)\overline{B}_{n+1}\biggl(\frac{a(k+y)+b(j+x)}{q(a,b)}\biggl)\nonumber\\
&&\qquad\quad-\frac{\delta_{1,n}\chi_{2}(-1)}{4}\sum_{k=1}^{bq}\chi_{1}(k)\delta_{\mathbb{Z}}\biggl(\frac{a(k+y)}{b}+x\biggl)\delta_{\mathbb{Z}}\biggl(\frac{k+y}{bq}\biggl)\nonumber\\
&&\qquad\qquad\times\overline{\chi_{2}}\biggl(\frac{a(k+y)}{b}+x\biggl).
\end{eqnarray}
Now, multiplying both sides of \eqref{eq4.5} by $ab^{n}$ gives the desired result.
\end{proof}

It is clear that, taking $n=1$, $\chi_{1}=\chi_{2}=\chi$, $(a,b)=1$, $a\equiv0$ or $b\equiv0\pmod{q}$, and $x=y=0$ in Corollary \ref{cor4.1}, we rediscover Berndt's reciprocity formula \eqref{eq1.4}. It is also easy to see that, taking $\chi_{1}=\chi_{2}=\chi$ and $(a,b)=1$ in Corollary \ref{cor4.1}, we obtain an equivalent version of the reciprocity formula of Cenkci, Can and Kurt \cite[Theorem 5.4]{cenkci}. In addition, Corollary \ref{cor4.1} also implies that, for $a,b,n,q\in\mathbb{N}$ with $q\geq 2$, and for non-principal primitive Dirichlet characters $\chi_{1}$ and $\chi_{2}$ modulo $q$ with $(-1)^{n+1}\chi_{1}(-1)\chi_{2}(-1)=1$,
\begin{eqnarray}\label{eq4.6}
&&ab^{n}s_{n}(a,b;\chi_{1},\overline{\chi_{2}};0,0)+ba^{n}s_{n}(b,a;\overline{\chi_{2}},\chi_{1};0,0)\nonumber\\
&&\qquad=\frac{1}{n+1}\sum_{j=1}^{n}\binom{n+1}{j}a^{j}b^{n+1-j}B_{j}(\chi_{1})B_{n+1-j}(\overline{\chi_{2}})\nonumber\\
&&\qquad\quad+\frac{nq^{n-1}(a,b)^{n+1}}{n+1}\sum_{k=1}^{q}\sum_{j=1}^{q}\chi_{1}(k)\overline{\chi_{2}}(j)\overline{B}_{n+1}\biggl(\frac{ak+bj}{q(a,b)}\biggl).
\end{eqnarray}
This means that Da\u{g}l{\i} and Can's reciprocity formula \cite[Theorem 1]{dagli} also holds in the case $n=1$.

Similarly, we define the sums $s_{n}^{(1)}(a,b;\chi;x,y)$ and $s_{n}^{(2)}(a,b;\chi;x,y)$ by
\begin{equation}\label{eq4.7}
s_{n}^{(1)}(a,b;\chi;x,y)=\sum_{r=1}^{bq}\chi(r)\overline{B}_{n}\biggl(\frac{a(r+y)}{b}+x\biggl)\overline{B}_{1}\biggl(\frac{r+y}{bq}\biggl),
\end{equation}
and
\begin{equation}\label{eq4.8}
s_{n}^{(2)}(a,b;\chi;x,y)=\sum_{r=1}^{bq}\overline{B}_{n}\biggl(\frac{a(r+y)}{b}+x,\chi\biggl)\overline{B}_{1}\biggl(\frac{r+y}{bq}\biggl),
\end{equation}
where $n,q\in\mathbb{N}$, $a,b\in\mathbb{Z}$ with $b\geq1$, $\chi$ is a Dirichlet character modulo $q$, and $x,y\in\mathbb{R}$.

As a consequence of Theorem \ref{thm1.3}, we have the reciprocity formula relating the sums \eqref{eq4.7} and \eqref{eq4.8} as follows.

\begin{corollary}\label{cor4.2} Let $a,b,n,q\in\mathbb{N}$ with $q\geq 2$. Let $\chi$ be a non-principal primitive Dirichlet character modulo $q$. Then, for $x,y\in\mathbb{R}$,
\begin{eqnarray}\label{eq4.9}
&&ab^{n}s_{n}^{(1)}(a,b;\chi;x,y)+ba^{n}s_{n}^{(2)}(b,a;\chi;y,x)\nonumber\\
&&\qquad=\frac{1}{n+1}\sum_{j=1}^{n+1}\binom{n+1}{j}a^{j}b^{n+1-j}\overline{B}_{j}(y,\chi)\overline{B}_{n+1-j}(x)\nonumber\\
&&\qquad\quad+\frac{n(b,aq)^{n+1}}{q(n+1)}\sum_{k=1}^{q}\chi(k)\overline{B}_{n+1}\biggl(\frac{a(k+y)+bx}{(b,aq)}\biggl)\nonumber\\
&&\qquad\quad-\frac{\delta_{1,n}\chi(-1)ab}{4}\sum_{k=1}^{aq}\delta_{\mathbb{Z}}\biggl(\frac{b(k+x)}{a}+y\biggl)\delta_{\mathbb{Z}}\biggl(\frac{k+x}{aq}\biggl)\nonumber\\
&&\qquad\qquad\times\chi\biggl(\frac{b(k+x)}{a}+y\biggl).
\end{eqnarray}
\end{corollary}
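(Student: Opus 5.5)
The plan mirrors the proof of Corollary \ref{cor4.1}: we specialize Theorem \ref{thm1.3} and then simplify the resulting extra sums.

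\textbf{Specializing Theorem \ref{thm1.3}.} Take $n=1$ and $b=1$ there, rename $m$ as $n$ and $c$ as $b$, and choose the shifts so that the left-hand side becomes $s_{n}^{(2)}(\cdot,\cdot;\chi;\cdot,\cdot)$. Concretely, in \eqref{eq1.16} we use $(a,b,c)\to(b,1,a)$, $x\to y$, $y\to 0$, $z\to x$, so that the left side reads $s_{n}^{(2)}(b,a;\chi;y,x)$.

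\textbf{The $s^{(1)}$ piece.} The second sum on the right runs over $j=0,1$. The $j=1$ term is $s^{(1)}_{n,1}(a,1,b;\chi;x,0,y)$, which equals $s_{n}^{(1)}(a,b;\chi;x,y)$. This supplies the $s^{(1)}$ term after we multiply by $ba^{n}$ and move it across; the factor $(b/a)^{n-1}$ becomes $ab^{n}$.

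\textbf{The $s^{(3)}$ piece.} The first sum on the right involves $s^{(3)}_{j,n+1-j}(-b,a,1;\chi;y,x,0)$. These are single terms $\overline{B}_{j}(-bq+y,\chi)\overline{B}_{n+1-j}(aq+x)$, which equal $\overline{B}_{j}(y,\chi)\overline{B}_{n+1-j}(x)$ by periodicity. They give the binomial sum over $1\le j\le n$, with coefficients $\binom{n}{j}\frac{(b/a)^{n-j}}{n+1-j}\cdot\frac{1}{aq}$-type factors. We must check that these rescale to $\frac{1}{n+1}\binom{n+1}{j}a^{j}b^{n+1-j}$; the identity $\binom{n}{j}\frac{n+1}{n+1-j}=\binom{n+1}{j}$ does this.

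\textbf{The remaining $s^{(1)}$ term and the $j=n+1$ term.} The $j=0$ term of the $s^{(1)}$ sum is a character sum $\sum_{r=1}^{aq}\chi(r)\overline{B}_{n+1}(\frac{b(r+y)}{a}+x)$, up to the roles of $x,y$. The extra constant term in \eqref{eq1.16} involves $\overline{B}_{n+1}(\ldots,\chi)$ with coefficient $\frac{(-1)^{n-1}n!(a,1)^{n+1}\ldots}{(n+1)!}$, here with $(b,1)=1$ and $\overline{\chi}(1)=1$. This constant term should produce the missing $j=n+1$ term $\overline{B}_{n+1}(y,\chi)\overline{B}_{0}(x)$ of the binomial sum, using $\overline{B}_{n+1}(-\xi,\chi)=(-1)^{n+1}\chi(-1)\overline{B}_{n+1}(\xi,\chi)$.

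\textbf{Reducing the character sum.} For the $j=0$ character sum, we write $r=ql+s$ and apply \eqref{eq4.4} to the inner sum over $l$ modulo $a$ with step $bq/(aq)$. This collapses the sum to $\frac{(b,aq)^{n+1}}{(aq)^{n}}\sum_{k=1}^{q}\chi(k)\overline{B}_{n+1}(\cdot/(b,aq))$, which is exactly the gcd term in \eqref{eq4.9}. The $\delta$-term transfers directly after the change of variables.

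\textbf{Expected main obstacle.} The hard part is the bookkeeping in two places. One is matching the constant term with the $j=n+1$ term and tracking the sign $\chi(-1)$. The other is applying \eqref{eq4.4} when the modulus is $aq$ but the period structure involves both $a$ and $q$, which is why $(b,aq)$ rather than $(a,b)$ appears. I would handle the latter by running \eqref{eq4.4} with modulus $aq$ on $\sum_{r}$ after splitting off the character via \eqref{eq2.8}. I would then verify the normalization by testing the case $n=1$, $x=y=0$ against \eqref{eq1.7}.
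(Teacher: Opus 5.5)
Your plan is the paper's proof: specialize Theorem \ref{thm1.3} at $n=1$, $b=1$, $y=0$, read off the $s^{(1)}$, $s^{(3)}$ and constant terms, collapse the $j=0$ character sum via $r=ql+s$ and \eqref{eq4.4}, and multiply through. The only difference is that you do the swap $a\leftrightarrow b$, $x\leftrightarrow y$ at the start rather than at the end. Several bookkeeping details are off, though, and will be corrected once you actually carry out the substitution:
\begin{itemize}
\item The sign in the constant term is $(-1)^{1-1}=1$, because it is Theorem \ref{thm1.3}'s $n$, which is set to $1$. So $\frac{a}{b(n+1)}\overline{B}_{n+1}(y,\chi)$ gives the $j=n+1$ summand directly, with no reflection and no $\chi(-1)$.
\item The $s^{(3)}$ coefficients carry no factor $1/(aq)$.
\item Under your substitution, the $j=0$ sum is $\sum_{r=1}^{bq}\chi(r)\overline{B}_{n+1}(\frac{a(r+y)}{b}+x)$. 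Using \eqref{eq4.4} with modulus $b$ and step $aq$, and without \eqref{eq2.8}, it equals $\frac{(b,aq)^{n+1}}{b^{n}}\sum_{k=1}^{q}\chi(k)\overline{B}_{n+1}(\frac{a(k+y)+bx}{(b,aq)})$. The normalization is $b^{-n}$, not $(aq)^{-n}$.
\end{itemize}
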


\begin{proof}
Taking $n=b=1$ and $y=0$ in Theorem \ref{thm1.3} and then replacing $m$ by $n$, $c$ by $b$, and $z$ by $y$, we find that
\begin{eqnarray}\label{eq4.10}
&&s_{n}^{(2)}(a,b;\chi;x,y)+\biggl(\frac{a}{b}\biggl)^{n-1}s_{n}^{(1)}(b,a;\chi;y,x)\nonumber\\
&&\qquad=\frac{1}{n+1}\sum_{j=1}^{n+1}\binom{n+1}{j}\biggl(\frac{a}{b}\biggl)^{n-j}\overline{B}_{j}(x,\chi)\overline{B}_{n+1-j}(y)\nonumber\\
&&\qquad\quad+\frac{na^{n-1}}{b^{n}q(n+1)}\sum_{r=1}^{aq}\chi(r)\overline{B}_{n+1}\biggl(\frac{b(r+x)}{a}+y\biggl)\nonumber\\
&&\qquad\quad-\frac{\delta_{1,n}\chi(-1)}{4}\sum_{k=1}^{bq}\delta_{\mathbb{Z}}\biggl(\frac{a(k+y)}{b}+x\biggl)\delta_{\mathbb{Z}}\biggl(\frac{k+y}{bq}\biggl)\nonumber\\
&&\qquad\qquad\times\chi\biggl(\frac{a(k+y)}{b}+x\biggl).
\end{eqnarray}
Note that from \eqref{eq4.4} and the division algorithm we have
\begin{eqnarray}\label{eq4.11}
&&\sum_{r=1}^{aq}\chi(r)\overline{B}_{n+1}\biggl(\frac{b(r+x)}{a}+y\biggl)\nonumber\\
&&\qquad=\sum_{l=0}^{a-1}\sum_{k=1}^{q}\chi(ql+k)\overline{B}_{n+1}\biggl(\frac{b(ql+k+x)}{a}+y\biggl)\nonumber\\
&&\qquad=\frac{(a,bq)^{n+1}}{a^{n}}\sum_{k=1}^{q}\chi(k)\overline{B}_{n+1}\biggl(\frac{b(k+x)+ay}{(a,bq)}\biggl).
\end{eqnarray}
Applying \eqref{eq4.11} to the right-hand side of \eqref{eq4.10}, we obtain
\begin{eqnarray*}
&&s_{n}^{(2)}(a,b;\chi;x,y)+\biggl(\frac{a}{b}\biggl)^{n-1}s_{n}^{(1)}(b,a;\chi;y,x)\nonumber\\
&&\qquad=\frac{1}{n+1}\sum_{j=1}^{n+1}\binom{n+1}{j}\biggl(\frac{a}{b}\biggl)^{n-j}\overline{B}_{j}(x,\chi)\overline{B}_{n+1-j}(y)\nonumber\\
&&\qquad\quad+\frac{n(a,bq)^{n+1}}{ab^{n}q(n+1)}\sum_{k=1}^{q}\chi(k)\overline{B}_{n+1}\biggl(\frac{b(k+x)+ay}{(a,bq)}\biggl)\nonumber\\
&&\qquad\quad-\frac{\delta_{1,n}\chi(-1)}{4}\sum_{k=1}^{bq}\delta_{\mathbb{Z}}\biggl(\frac{a(k+y)}{b}+x\biggl)\delta_{\mathbb{Z}}\biggl(\frac{k+y}{bq}\biggl)\chi\biggl(\frac{a(k+y)}{b}+x\biggl).
\end{eqnarray*}
Therefore, multiplying both sides of the above identity by $ab^{n}$ and then swapping $a$ with $b$ and $x$ with $y$, we obtain the desired result.
\end{proof}

It is obvious that, taking $x=y=0$ in Corollary \ref{cor4.2}, we obtain the following result: for $a,b,n,q\in\mathbb{N}$ with $q\geq 2$, and for a non-principal primitive Dirichlet character $\chi$ modulo $q$ with $(-1)^{n+1}\chi(-1)=1$,
\begin{eqnarray}\label{eq4.12}
&&ab^{n}s_{n}^{(1)}(a,b;\chi;0,0)+ba^{n}s_{n}^{(2)}(b,a;\chi;0,0)\nonumber\\
&&\qquad=\frac{1}{n+1}\sum_{j=1}^{n+1}\binom{n+1}{j}a^{j}b^{n+1-j}B_{j}(\chi)\overline{B}_{n+1-j}(0)\nonumber\\
&&\qquad\quad+\frac{n(b,aq)^{n+1}}{q(n+1)}\sum_{k=1}^{q}\chi(k)\overline{B}_{n+1}\biggl(\frac{ak}{(b,aq)}\biggl).
\end{eqnarray}
In particular, the case $n=1$ and $(a,b)=1$ in \eqref{eq4.12} gives Berndt's reciprocity formulas \eqref{eq1.7} and \eqref{eq1.8}.

As another consequence of Theorem \ref{thm1.3}, we have the following three-term reciprocity formula, which removes the hypotheses $(b,q)=1$ and $(a,b)=(b,c)=(c,a)=1$ in Berndt's reciprocity formulas \cite[Corollary 8.5]{berndt4}.

\begin{corollary}\label{cor4.3} Let $a,b,c,q\in\mathbb{N}$ with $q\geq 2$. Let $\chi$ be an even, non-principal primitive Dirichlet character modulo $q$. Then
\begin{eqnarray}\label{eq4.13}
&&s_{1,1}^{(1)}(c,b,a;\chi;0,0,0)+s_{1,1}^{(2)}(a,b,c;\chi;0,0,0)+s_{1,1}^{(3)}(a,c,b;\chi;0,0,0)\nonumber\\
&&\qquad=\frac{b(a,cq)^{2}}{2acq}\sum_{j=1}^{q}\chi(j)\overline{B}_{2}\biggl(\frac{cj}{(a,cq)}\biggl)+\frac{(a,b)^{2}c\overline{\chi}(\frac{b}{(a,b)})B_{2}(\chi)}{2ab}\nonumber\\
&&\qquad\quad-\frac{1}{4}\sum_{k=1}^{cq}\delta_{\mathbb{Z}}\biggl(\frac{ak}{c}\biggl)\delta_{\mathbb{Z}}\biggl(\frac{bk}{cq}\biggl)\chi\biggl(\frac{ak}{c}\biggl).
\end{eqnarray}
\end{corollary}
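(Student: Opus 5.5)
The plan is to specialize Theorem \ref{thm1.3} to $m=n=1$ and $x=y=z=0$, with $\chi$ even. Then we identify each term on the right-hand side of \eqref{eq1.16} with one of the three sums in \eqref{eq4.13}, or with an explicit constant.

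With $m=n=1$, the first sum in \eqref{eq1.16} has only the term $j=1$. Its coefficient is $1\cdot\binom11\frac{(a/c)^0}{1}=1$, so it gives $s^{(3)}_{1,1}(-a,c,b;\chi;0,0,0)$. Since $\chi$ is even, \eqref{eq2.7} together with \eqref{eq2.8} shows that $\overline{B}_1(-u,\chi)=-\chi(-1)\overline{B}_1(u,\chi)=-\overline{B}_1(u,\chi)$. Hence this term equals $-s^{(3)}_{1,1}(a,c,b;\chi;0,0,0)$.

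The second sum has $j=0$ and $j=1$.
\begin{itemize}
\item For $j=1$ the coefficient is $-1$, which gives $-s^{(1)}_{1,1}(c,b,a;\chi;0,0,0)$.
\item For $j=0$ the coefficient is $\frac{b}{2cq}$, and the term is $s^{(1)}_{2,0}(c,b,a;\chi;0,0,0)=\sum_{r=1}^{aq}\chi(r)\overline{B}_2(cr/a)$, using $\overline{B}_0=1$. This sum is evaluated exactly as in \eqref{eq4.11}, via \eqref{eq4.4} and the division algorithm $r=ql+k$. The result is $\frac{(a,cq)^2}{a}\sum_{k=1}^q\chi(k)\overline{B}_2\bigl(\frac{ck}{(a,cq)}\bigr)$, so this part contributes $\frac{b(a,cq)^2}{2acq}\sum_j\chi(j)\overline{B}_2(\frac{cj}{(a,cq)})$.
\end{itemize}

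For the constant term with $m=n=1$, $(-1)^{0}\,1!\,1!/2!=1/2$. The factor $\overline{B}_2(0,\chi)=B_2(\chi)$ survives, which gives $\frac{(a,b)^2c\overline{\chi}(b/(a,b))B_2(\chi)}{2abq^{0}}$. The $\delta$-term has $\chi(-1)=1$ and becomes exactly the last line of \eqref{eq4.13}.

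Moving $-s^{(1)}_{1,1}$ and $-s^{(3)}_{1,1}$ to the left-hand side yields \eqref{eq4.13}. The main thing to check carefully is the sign manipulation for the $s^{(3)}$ term. Negating the argument must reproduce precisely $s^{(3)}_{1,1}(a,c,b;\chi;0,0,0)$, with $\overline{B}_1$ of the non-character argument also flipping sign. The $\overline{B}_1(cq(r)/b)$ factor is unchanged since $c$ is not negated, so only one sign flip arises, from the character Bernoulli function. The evenness of $\chi$ is used here and in the $\delta$-term. The second thing to check is that \eqref{eq4.4} applies verbatim with $a$ and $c$ playing the roles of $a$ and $b$, including the boundary convention of $\overline{B}_2$, which is continuous, so no correction terms arise.
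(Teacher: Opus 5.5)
Your proposal is correct and follows the paper's proof. The paper likewise specializes Theorem \ref{thm1.3} to $m=n=1$ and $x=y=z=0$. It uses the evenness of $\chi$ with \eqref{eq2.7} to rewrite $s^{(3)}_{1,1}(-a,c,b;\chi;0,0,0)$ as $-s^{(3)}_{1,1}(a,c,b;\chi;0,0,0)$, which gives \eqref{eq4.14}, and then evaluates $\sum_{r=1}^{aq}\chi(r)\overline{B}_2(cr/a)$ via \eqref{eq4.4} and $r=ql+j$, exactly as in \eqref{eq4.15}.
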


\begin{proof} Taking $m=n=1$ and $x=y=z=0$ in Theorem \ref{thm1.3} and using \eqref{eq2.7}, we have
\begin{eqnarray}\label{eq4.14}
&&s_{1,1}^{(1)}(c,b,a;\chi;0,0,0)+s_{1,1}^{(2)}(a,b,c;\chi;0,0,0)+s_{1,1}^{(3)}(a,c,b;\chi;0,0,0)\nonumber\\
&&\qquad=\frac{b}{2cq}\sum_{r=1}^{aq}\chi(r)\overline{B}_{2}\biggl(\frac{cr}{a}\biggl)+\frac{(a,b)^{2}c\overline{\chi}(\frac{b}{(a,b)})B_{2}(\chi)}{2ab}\nonumber\\
&&\qquad\quad-\frac{1}{4}\sum_{k=1}^{cq}\delta_{\mathbb{Z}}\biggl(\frac{ak}{c}\biggl)\delta_{\mathbb{Z}}\biggl(\frac{bk}{cq}\biggl)\chi\biggl(\frac{ak}{c}\biggl).
\end{eqnarray}
It is easily seen from \eqref{eq4.4} that
\begin{eqnarray}\label{eq4.15}
\sum_{r=1}^{aq}\chi(r)\overline{B}_{2}\biggl(\frac{cr}{a}\biggl)&=&\sum_{l=0}^{a-1}\sum_{j=1}^{q}\chi(ql+j)\overline{B}_{2}\biggl(\frac{c(ql+j)}{a}\biggl)\nonumber\\
&=&\frac{(a,cq)^{2}}{a}\sum_{j=1}^{q}\chi(j)\overline{B}_{2}\biggl(\frac{cj}{(a,cq)}\biggl).
\end{eqnarray}
Thus, the desired result follows immediately by applying \eqref{eq4.15} to the right-hand side of \eqref{eq4.14}.
\end{proof}

We next consider a special case of Theorem \ref{thm1.4}. Like the sums \eqref{eq4.1}, the sums $\widehat{s}_{n}(a,b;\chi_{1},\chi_{2};x,y)$ are defined by
\begin{equation}\label{eq4.16}
\widehat{s}_{n}(a,b;\chi_{1},\chi_{2};x,y)=\sum_{r=1}^{cq_{1}q_{2}}\chi_{1}(r)\overline{B}_{n}\biggl(\frac{a(r+y)}{c}+x,\chi_{2}\biggl)\overline{B}_{1}\biggl(\frac{r+y}{cq_{1}q_{2}}\biggl),
\end{equation}
where $n,q_{1},q_{2}\in\mathbb{N}$, $a,b\in\mathbb{Z}$ with $b\geq1$, $\chi_{1}$ and $\chi_{2}$ are Dirichlet characters modulo $q_{1}$ and $q_{2}$, respectively, and $x,y\in\mathbb{R}$. The following corollary gives the reciprocity formula for the sums \eqref{eq4.16}.

\begin{corollary}\label{cor4.4} Let $a,b,n,q_{1},q_{2}\in\mathbb{N}$ with $q_{1}\geq2$ and $q_{2}\geq2$. Let $\chi_{1}$ and $\chi_{2}$ be Dirichlet characters modulo $q_{1}$ and $q_{2}$, respectively. Then, for $x,y\in\mathbb{R}$,
\begin{eqnarray}\label{eq4.17}
&&ab^{n}\widehat{s}_{n}(a,b;\chi_{1},\overline{\chi_{2}};x,y)+ba^{n}\widehat{s}_{n}(b,a;\overline{\chi_{2}},\chi_{1};y,x)\nonumber\\
&&\qquad=\frac{1}{n+1}\sum_{j=1}^{n}\binom{n+1}{j}a^{j}b^{n+1-j}\overline{B}_{j}(y,\chi_{1})\overline{B}_{n+1-j}(x,\overline{\chi_{2}})\nonumber\\
&&\qquad\quad+\frac{n(aq_{1},bq_{2})^{n+1}}{q_{1}q_{2}(n+1)}\sum_{k=1}^{q_{1}}\sum_{j=1}^{q_{2}}\chi_{1}(k)\overline{\chi_{2}}(j)\overline{B}_{n+1}\biggl(\frac{a(k+y)+b(j+x)}{(aq_{1},bq_{2})}\biggl)\nonumber\\
&&\qquad\quad-\frac{\delta_{1,n}\chi_{2}(-1)ab}{4}\sum_{k=1}^{bq_{1}q_{2}}\chi_{1}(k)\delta_{\mathbb{Z}}\biggl(\frac{a(k+y)}{b}+x\biggl)
\delta_{\mathbb{Z}}\biggl(\frac{k+y}{bq_{1}q_{2}}\biggl)\nonumber\\
&&\qquad\qquad\times\overline{\chi_{2}}\biggl(\frac{a(k+y)}{b}+x\biggl).
\end{eqnarray}
\end{corollary}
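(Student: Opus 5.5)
The plan is to mirror the proof of Corollary \ref{cor4.1}, now starting from Theorem \ref{thm1.4} instead of Theorem \ref{thm1.2}. In \eqref{eq1.19} we take $n=b=1$ and $y=0$, then rename $m$ as $n$, $c$ as $b$, and $z$ as $y$.

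With these choices the left-hand side of \eqref{eq1.19} becomes $\widehat{s}_{n}(a,b;\chi_{1},\overline{\chi_{2}};x,y)$, where the sum \eqref{eq4.16} is read with $c=b$. The second group on the right (with $j=0,1$) splits as follows. The $j=1$ term gives $\bigl(\frac{a}{b}\bigr)^{n-1}\widehat{s}_{n}(b,a;\overline{\chi_{2}},\chi_{1};y,x)$, after using \eqref{eq2.7} to absorb the sign $(-1)^{j}$. The $j=0$ term carries $\overline{B}_{0}=1$, so it reduces to a single sum $\sum_{r=1}^{aq_{1}q_{2}}\overline{\chi_{2}}(r)\overline{B}_{n+1}(\cdot,\chi_{1})$.

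The first group involves $\widetilde{s}_{j,n+1-j}(-a,b,1;\overline{\chi_{2}},\chi_{1};x,y,0)$, which has only one term. Using \eqref{eq2.7} and the parity relation $\overline{B}_{j}(-u,\chi)=(-1)^{j}\chi(-1)\overline{B}_{j}(u,\chi)$ (a consequence of \eqref{eq2.8} and \eqref{eq2.7}), it collapses to $\overline{B}_{j}(x,\overline{\chi_{2}})\overline{B}_{n+1-j}(y,\chi_{1})$, since the arguments shift by integers times $q_{1}q_{2}$. This gives the finite sum $\frac{1}{n+1}\sum_{j=1}^{n}\binom{n+1}{j}(\frac{a}{b})^{n-j}\cdots$, exactly as in \eqref{eq4.3}.

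The remaining step, which I expect to be the main obstacle, is to evaluate the $j=0$ sum in closed form. I would expand $\overline{B}_{n+1}(\cdot,\chi_{1})$ by \eqref{eq2.8} into a double sum over $k\bmod q_{1}$, with argument $\frac{b(r+x)}{aq_{1}q_{2}}+\frac{k+y}{q_{1}}$. Next I write $r=q_{2}l+j$ with $0\le l\le aq_{1}-1$ and $1\le j\le q_{2}$, so that $\overline{\chi_{2}}(r)=\overline{\chi_{2}}(j)$. The inner sum over $l$ is then $\sum_{l=0}^{aq_{1}-1}\overline{B}_{n+1}\bigl(\frac{X+bl}{aq_{1}}\bigr)$ with $X=\frac{b(j+x)}{q_{2}}+a(k+y)$.

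The generalized Raabe formula \eqref{eq4.4}, applied with modulus $aq_{1}$ and shift $b$, turns this into $(aq_{1},b)^{n+1}(aq_{1})^{-n}\overline{B}_{n+1}\bigl(X/(aq_{1},b)\bigr)$. The gcd must then be rewritten so that the argument takes the stated form $\frac{a(k+y)+b(j+x)}{(aq_{1},bq_{2})}$. This requires clearing the $q_{2}$ in the denominator, i.e.\ first multiplying through and using \eqref{eq4.4} with the pair $(aq_{1}q_{2},bq_{2})$, for which the gcd is $(aq_{1},bq_{2})$ up to the scaling that matches. I would carry out this bookkeeping carefully, checking powers of $q_{1},q_{2}$ against the prefactor $\frac{n}{q_{1}q_{2}(n+1)}$.

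Finally, the delta-term carries over unchanged. Multiplying the whole identity by $ab^{n}$ gives \eqref{eq4.17}.
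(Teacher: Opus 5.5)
Your overall route is the paper's. You specialize Theorem \ref{thm1.4} at $n=b=1$, $y=0$, collapse the one-term $\widetilde{s}$-sums, and move the $j=1$ term to the left. You then evaluate the $j=0$ term via \eqref{eq2.8}, the split $r=q_{2}l+j$ and \eqref{eq4.4}, and multiply by $ab^{n}$. But the one step with real content, which you flag as the main obstacle, is set up incorrectly, and the repair you sketch does not work. Expanding by \eqref{eq2.8} gives
\[
\overline{B}_{n+1}\Bigl(\frac{b(r+x)}{a}+y,\chi_{1}\Bigr)=q_{1}^{n}\sum_{k=1}^{q_{1}}\chi_{1}(k)\overline{B}_{n+1}\Bigl(\frac{b(r+x)}{aq_{1}}+\frac{k+y}{q_{1}}\Bigr),
\]
with no $q_{2}$ in the first denominator. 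Your argument $\frac{b(r+x)}{aq_{1}q_{2}}+\frac{k+y}{q_{1}}$ belongs to a different sum, one with $\frac{b(r+x)}{aq_{2}}+y$ inside $\overline{B}_{n+1}(\cdot,\chi_{1})$. That is why the gcd $(aq_{1},b)$ and the stray term $\frac{b(j+x)}{q_{2}}$ appear. Your proposed cure, applying \eqref{eq4.4} to the pair $(aq_{1}q_{2},bq_{2})$, is not available, for two reasons:
\begin{itemize}
\item your $l$-sum has only $aq_{1}$ terms, not $aq_{1}q_{2}$;
\item $(aq_{1}q_{2},bq_{2})=q_{2}(aq_{1},b)$ is not a fixed multiple of $(aq_{1},bq_{2})$. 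For $a=b=1$ the ratio is $1$ when $q_{1}=q_{2}=2$ and $2$ when $q_{1}=3$, $q_{2}=2$.
\end{itemize}

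With the correct argument there is nothing to clear. Write $r=q_{2}l+j$ with $0\le l\le aq_{1}-1$ and $1\le j\le q_{2}$. By \eqref{eq4.4} with modulus $aq_{1}$ and shift $bq_{2}$, the inner sum is
\[
\sum_{l=0}^{aq_{1}-1}\overline{B}_{n+1}\Bigl(\frac{a(k+y)+b(j+x)+bq_{2}l}{aq_{1}}\Bigr)=\frac{(aq_{1},bq_{2})^{n+1}}{(aq_{1})^{n}}\,\overline{B}_{n+1}\Bigl(\frac{a(k+y)+b(j+x)}{(aq_{1},bq_{2})}\Bigr).
\]
The factor $q_{1}^{n}$ from \eqref{eq2.8} cancels the $q_{1}^{-n}$. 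Hence $\sum_{r=1}^{aq_{1}q_{2}}\overline{\chi_{2}}(r)\overline{B}_{n+1}\bigl(\frac{b(r+x)}{a}+y,\chi_{1}\bigr)$ equals $a^{-n}(aq_{1},bq_{2})^{n+1}\sum_{k=1}^{q_{1}}\sum_{j=1}^{q_{2}}\chi_{1}(k)\overline{\chi_{2}}(j)\overline{B}_{n+1}\bigl(\frac{a(k+y)+b(j+x)}{(aq_{1},bq_{2})}\bigr)$. Combine this with its prefactor $\frac{na^{n-1}}{b^{n}q_{1}q_{2}(n+1)}$ and multiply by $ab^{n}$: the result is exactly the middle term of \eqref{eq4.17}. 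This is precisely the paper's computation.

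Smaller points:
\begin{itemize}
\item At $j=1$ no appeal to \eqref{eq2.7} is needed. With $y=0$ the $\overline{B}_{1}$-argument is already $\frac{r+x}{aq_{1}q_{2}}$; the term simply carries a minus sign and is moved to the left.
\item Collapsing $\widetilde{s}_{j,n+1-j}(-a,b,1;\overline{\chi_{2}},\chi_{1};x,y,0)$ uses only that $\overline{B}_{j}(\cdot,\overline{\chi_{2}})$ has period $q_{2}$ and $\overline{B}_{n+1-j}(\cdot,\chi_{1})$ has period $q_{1}$. The parity relation is not needed.
\item Applying Theorem \ref{thm1.4} tacitly requires $\chi_{1}$ and $\chi_{2}$ to be non-principal and primitive, which the printed statement omits. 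The paper's proof relies on this in the same way.
\end{itemize}
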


\begin{proof} Taking $n=b=1$ and $y=0$ in Theorem \ref{thm1.4} and then replacing $m$ by $n$, $c$ by $b$, and $z$ by $y$, we know that
\begin{eqnarray}\label{eq4.18}
&&\widehat{s}_{n}(a,b;\chi_{1},\overline{\chi_{2}};x,y)+\biggl(\frac{a}{b}\biggl)^{n-1}\widehat{s}_{n}(b,a;\overline{\chi_{2}},\chi_{1};y,x)\nonumber\\
&&\qquad=\frac{1}{n+1}\sum_{j=1}^{n}\binom{n+1}{j}\biggl(\frac{a}{b}\biggl)^{n-j}\overline{B}_{j}(x,\overline{\chi_{2}})\overline{B}_{n+1-j}(y,\chi_{1})\nonumber\\
&&\qquad\quad+\frac{na^{n-1}}{b^{n}q_{1}q_{2}(n+1)}\sum_{r=1}^{aq_{1}q_{2}}\overline{\chi_{2}}(r)\overline{B}_{n+1}\biggl(\frac{b(r+x)}{a}+y,\chi_{1}\biggl)\nonumber\\
&&\qquad\quad-\frac{\delta_{1,n}\chi_{2}(-1)}{4}\sum_{k=1}^{bq_{1}q_{2}}\chi_{1}(k)\delta_{\mathbb{Z}}\biggl(\frac{a(k+y)}{b}+x\biggl)
\delta_{\mathbb{Z}}\biggl(\frac{k+y}{bq_{1}q_{2}}\biggl)\nonumber\\
&&\qquad\qquad\times\overline{\chi_{2}}\biggl(\frac{a(k+y)}{b}+x\biggl).
\end{eqnarray}
From \eqref{eq2.8} and \eqref{eq4.4}, we obtain
\begin{eqnarray*}
&&\sum_{r=1}^{aq_{1}q_{2}}\overline{\chi_{2}}(r)\overline{B}_{n+1}\biggl(\frac{b(r+x)}{a}+y,\chi_{1}\biggl)\nonumber\\
&&\qquad=q_{1}^{n}\sum_{r=1}^{aq_{1}q_{2}}\overline{\chi_{2}}(r)\sum_{k=1}^{q_{1}}\chi_{1}(k)\overline{B}_{n+1}\biggl(\frac{b(r+x)}{aq_{1}}+\frac{k+y}{q_{1}}\biggl)\nonumber\\
&&\qquad=q_{1}^{n}\sum_{l=0}^{aq_{1}-1}\sum_{j=1}^{q_{2}}\overline{\chi_{2}}(q_{2}l+j)\sum_{k=1}^{q_{1}}\chi_{1}(k)\overline{B}_{n+1}\biggl(\frac{b(j+x)}{aq_{1}}+\frac{bq_{2}l}{aq_{1}}+\frac{k+y}{q_{1}}\biggl)\nonumber\\
&&\qquad=\frac{(aq_{1},bq_{2})^{n+1}}{a^{n}}\sum_{j=1}^{q_{2}}\overline{\chi_{2}}(j)\sum_{k=1}^{q_{1}}\chi_{1}(k)\overline{B}_{n+1}\biggl(\frac{b(j+x)+a(k+y)}{(aq_{1},bq_{2})}\biggl).
\end{eqnarray*}
This, together with \eqref{eq4.18}, yields
\begin{eqnarray}\label{eq4.19}
&&\widehat{s}_{n}(a,b;\chi_{1},\overline{\chi_{2}};x,y)+\biggl(\frac{a}{b}\biggl)^{n-1}\widehat{s}_{n}(b,a;\overline{\chi_{2}},\chi_{1};y,x)\nonumber\\
&&\qquad=\frac{1}{n+1}\sum_{j=1}^{n}\binom{n+1}{j}\biggl(\frac{a}{b}\biggl)^{n-j}\overline{B}_{j}(x,\overline{\chi_{2}})\overline{B}_{n+1-j}(y,\chi_{1})\nonumber\\
&&\qquad\quad+\frac{n(aq_{1},bq_{2})^{n+1}}{ab^{n}q_{1}q_{2}(n+1)}\sum_{k=1}^{q_{1}}\sum_{j=1}^{q_{2}}\chi_{1}(k)\overline{\chi_{2}}(j)\overline{B}_{n+1}\biggl(\frac{a(k+y)+b(j+x)}{(aq_{1},bq_{2})}\biggl)\nonumber\\
&&\qquad\quad-\frac{\delta_{1,n}\chi_{2}(-1)}{4}\sum_{k=1}^{bq_{1}q_{2}}\chi_{1}(k)\delta_{\mathbb{Z}}\biggl(\frac{a(k+y)}{b}+x\biggl)
\delta_{\mathbb{Z}}\biggl(\frac{k+y}{bq_{1}q_{2}}\biggl)\nonumber\\
&&\qquad\qquad\times\overline{\chi_{2}}\biggl(\frac{a(k+y)}{b}+x\biggl).
\end{eqnarray}
Thus, multiplying both sides of \eqref{eq4.19} by $ab^{n}$, we obtain the desired result.
\end{proof}

In particular, taking $x=y=0$ in Corollary \ref{cor4.4}, we see that for $a,b,n,q_{1},q_{2}\in\mathbb{N}$ with $q_{1}\geq2$, $q_{2}\geq2$, and $q_{1}\neq q_{2}$, and for non-principal primitive Dirichlet characters $\chi_{1}$ and $\chi_{2}$ modulo $q_{1}$ and $q_{2}$, respectively, if $(-1)^{n+1}\chi_{1}(-1)\chi_{2}(-1)=1$, then
\begin{eqnarray}\label{eq4.20}
&&ab^{n}\widehat{s}_{n}(a,b;\chi_{1},\overline{\chi_{2}};0,0)+ba^{n}\widehat{s}_{n}(b,a;\overline{\chi_{2}},\chi_{1};0,0)\nonumber\\
&&\qquad=\frac{1}{n+1}\sum_{j=1}^{n}\binom{n+1}{j}a^{j}b^{n+1-j}B_{j}(\chi_{1})B_{n+1-j}(\overline{\chi_{2}})\nonumber\\
&&\qquad\quad+\frac{n(aq_{1},bq_{2})^{n+1}}{q_{1}q_{2}(n+1)}\sum_{k=1}^{q_{1}}\sum_{j=1}^{q_{2}}\chi_{1}(k)\overline{\chi_{2}}(j)\overline{B}_{n+1}\biggl(\frac{ak+bj}{(aq_{1},bq_{2})}\biggl).
\end{eqnarray}
This indicates an error in the reciprocity formula of Da\u{g}l{\i} and Can \cite[Corollary 3]{dagli}. Specifically, their formula omits the last term appearing on the right-hand side of \eqref{eq4.20}.

\end{document}